\title{An Estimation of Phase Transition}
\author{Shamsa Ishaq\thanks{Abdus Salam School of Mathematical Sciences
GC University, Lahore. 68-B, New MuslimTown, Lahore 54600, Pakistan. \texttt{Email:shamsi.ishaq@gmail.com}. 
This work was partially supported by funds from LMBA/ UMR 6205 and ASSMS. Shamsa Ishaq would like to acknowledge the support of LMBA and thanks to the University of Western Brittany-UBO, Brest for their kind hospitality where part of this work was completed.}
\,\,and\,\, Renaud Leplaideur \thanks{University of New Caledonia UNC. 145, Avenue James Cook - BP R4 98 851 - 
Noumea-Cedex New Caledonia. \texttt{Email:renaud.leplaideur@unc.nc}.}}
\date{}	
\newtheorem{theorem}{Theorem}
\newtheorem{proposition}{Proposition}[section]
\newtheorem{lemma}[proposition]{Lemma}
\newtheorem{corollary}[proposition]{Corollary}
\newtheorem{definition}[proposition]{Definition}
\def\ie{{\em i.e.,\ }}
\def\N{{\mathbb N}}
\def\R{{\mathbb R}}
\def\K{{\mathbb K}}
\newcommand {\CB}{{\mathcal B}}
\newcommand {\CC}{{\mathcal C}}
\newcommand {\CD}{{\mathcal D}}
\newcommand {\CE}{{\mathcal E}}
\newcommand {\CF}{{\mathcal F}}
\newcommand {\CI}{{\mathcal I}}
\newcommand {\CM}{{\mathcal M}}
\newcommand {\CO}{{\mathcal O}}
\newcommand {\CP}{{\mathcal P}}
\newcommand {\CR}{{\mathcal R}}
\newcommand {\CS}{{\mathcal S}}
\newcommand {\CT}{{\mathcal T}}
\def\s{\sigma}
\def\l{\lambda}
\def\1{ {\hbox{{\it 1}} \!\! I} }
\def\be{\beta}
\def\de{\delta}
\def\ga{\gamma}
\def\om{\omega}
\def\v{\varphi}
\def\S{\Sigma}
\def\s{\sigma}
\def\G{\Gamma}
\def\8{\infty}
\def\p{\prime}
\def\8{\infty}
\def\llb{[\![}
\def\rrb{]\!]}
\renewcommand{\S}{\Sigma}
\theoremstyle{definition}
\newtheorem{remark}{Remark}	
\begin{document}
\maketitle


\begin{abstract}
In \cite{Bruin_2015}, H. Bruin and R. Leplaideur studied a class of potentials such that the pressure function exhibit a phase transition at a parameter $\be_{c}>0$. This paper will prove that the transition in pressure function cannot appear within the interval $]0,2]$. 



\bigskip
\noindent
{\it AMS classification}: 37D35, 82B26, 37A60, 37B10, 68R15, 37F20.

\noindent
{\it Keywords}: thermodynamic formalism, freezing phase transition, Fibonacci word, equilibrium states.  
\end{abstract}


\section{Introduction.}
\label{introduction}

Let $T:X\rightarrow X$ be a continuous map defined on a compact metric space $X$. The topological pressure of a continuous function $\v:X\rightarrow \R$ satisfies the following variational principle:

\begin{equation}
\label{P-fct}
\CP(\be)=\sup_{\mu\in\CM_{T}(X)}\Big\{h_{\mu}+\be\int\v d\mu \Big\},
\end{equation}
 
 where $\CM(T)$ is the space of $T$-invariant probability measure, and $h_{\mu}$  is the Kolmogorov entropy (see \cite{walters2000introduction}). A measure attaining the supremum in \eqref{P-fct} is called an {\it equilibrium state}. During the 1970's it was shown that if the dynamical system $T$ is uniformly hyperbolic and the function $\v$ is regular enough (say H\"older), then there exists a unique equilibrium measure. Moreover, this measure has strong ergodic properties, and the function $\be\rightarrow P(\be\v)$ is real analytic (see, {\it e.g.} \cite{bowen2008equilibrium, keller1998equilibrium}). 
 
 Since the 80's, it has been a natural challenge to export this kind of results to systems with lighter hyperbolic  properties. The literature on that topic is very long. Note that there is still no general theory for Thermodynamic Formalism for non uniformly hyperbolic systems. 
 
 However, another natural question is to understand and study examples where the result fails and in particular, where/how/why it fails. This is the question of \emph{phase transition}. There are two natural strategies for that goal: either we study systems with lighter hyperbolicity or we study uniformly hyperbolic systems but deal with less regular $\v$. 
 
 For the first case, we mention, {\it e.g.} the Manneville Pomeau map or some unimodal maps being important examples (\cite{pomeau1980intermittent}). For the second case, Hofbauer in 1977 constructed a non H\"older function $\v$ on the full shift on two symbols for which its corresponding pressure function $\be\rightarrow P(\be\v)$ is not real analytic \cite{hofbauer1977examples}. 

 Roughly speaking, the main idea within this strategy of constructing systems exhibiting phase transitions is to consider a uniformly hyperbolic system and a continuous function which is H\"older in most of the space but not in a $T$-invariant subset. This subset is considered as ``the bad set''. In the case of Hofbauer's example, the bad set is a fixed point. This idea was later generalized by Markley and Paul (see \cite{markley1982equilibrium}) to include more general invariant sets (actually these could have positive entropy). More general manifestations of this idea can be found in the work of Climenhaga and Thompson (see \cite{climenhaga2013equilibrium,climenhaga2016unique}). It turns out that when the supremum in \eqref{P-fct} is attained in measures supported in the bad set where the function is not H\"older, then  phase transitions can occur. 
 
 We emphasize that both strategies (to weaken hyperbolicity or weaken regularity of $\v$) are not so disconnected. A link was done in  \cite{baraviera2012potential} between, on the one hand the Manneville-Pomeau map, and on the other hand the Hofbauer potential in the 2-full shift. The link was based on a \emph{renormalization operator} acting on potentials.
 This renormalization operator on potentials induced a renormalization on the dynamics, and led Bruin and Leplaideur to study the connection with substitutions in the full shifts (see \cite{Bruin_2013, Bruin_2015}). In these cases, the bad set is the natural invariant set (an attractor) for the considered substitution. The two classical Thue-Morse  and Fibonacci cases were studied. 
 
 In \cite{Bruin_2015}, the authors proved that the pressure function could exhibit a phase transition at the parameter $\be_c$ for the Fibonacci case, and pose the question to estimate the value of $\be_{c}$ (see, \cite[p.743]{Bruin_2015}). In this article, we will give a partial answer to the question that $\be_{c}\not\in ]0,2]$. 
 
 We emphasize that this results exhibit a new phenomenon, different from the phase transition in the Hofbauer case. Indeed (see below), a classical computation shows that adjusting well parameters, the phase transition in the Hofbauer case can occur for any value of $\be$ in $]1,+\8[$. Here, it must be bigger than 2, whatever the parameters are (our $\v$ looks like the Hofbauer potential). We explain this because in the Hofbauer case, the bad set is a single fixed point, whereas, for substitution, it is a \emph{quasi-crystal}. Even if it is minimal and with zero entropy, a quasi-crystal ``generates more chaos around it'' (in particular it does not have a local product structure) that seems to be the argument to prevent a phase transition at any temperature. 
 
 The lower bound that we obtain here may appear as a modest contribution to estimate the critical $\be_{c}$ for the Fibonacci substitution. However, we emphasize the total lack of tools in Ergodic Theory to study in depth chaos that happens at sub-exponential scale. Hence, our present work has to be seen as a work in progress in view to invent  and develop methods and  tools that we need to study Thermodynamic Formalism with zero entropy.

\subsection{Main setting and Result.}
\label{sec-main setting}

We denote $\S=\{0,1\}^{\N}$ is the one sided binary full shift. An element $x\in \S$, $x=x_0x_1x_2\cdots$ is called one-sided infinite word. The shift map is the map $\s:\S\rightarrow\S$ defined by, $\s(x_0x_1x_2\cdots)=x_1x_2x_3\cdots.$ A finite word is a finite sequence of 0 and 1. Let $u=u_0u_1\cdots u_{n-1}$ be a finite word, its length is $n$ and is denoted by $|u|$. We denote $\S^{+}$ the set of non-empty words, and $\epsilon$ is the empty word with $|\epsilon|=0$. 

The distance between $x_{0}x_{1}x_{2}\ldots $ and $y_{0}y_{1}y_{2}\ldots$ is given by $2^{-n(x,y)}$ where 
$$n(x,y)=\min\{k\in \N\cup\{+\8\},\ x_{k}\neq y_{k}\}.$$

A finite word $u=u_0u_1\cdots u_{n-1}$ is called a factor of a word $v$ if there exist words $r, s$ such that $v=rus$. The word $u$ is called
\begin{itemize}
\item a prefix of $v$ if $r=\epsilon$, 
\item a suffix of $v$ if $s=\epsilon$,
\item an inner factor of $v$ if $r\not=\epsilon$ and $s\not=\epsilon$.
\end{itemize}

The (infinite) {\it Fibonacci word},
$$\om= 0 1 0 01 010 01001 01001010 0100101001001 \cdots,$$ is the limit of sequence of finite Fibonacci words, defined inductively by $\om_{0}=0$, $\om_{1}=01$ and $$\om_{n+1}=\om_{n}\om_{n-1},\quad\quad n\geq 1.$$
The length of the word $|\om_{n}|=F_{n}$, where $\{F_n: n\in \N\}$ is the sequence of {\it Fibonacci numbers}, with the initial condition $F_{0}=1$, $ F_{1}=2$, and $F_{n+1}=F_{n}+F_{n-1}$. 
We denote by $L(\om)$ the set of factors of $\om$. 
The closure of the orbit, $$\K:= \overline{\{\sigma^{n}(\om) : n\in \N\}},$$ is the subshift associated with $\om$. The infinite Fibonacci word has remarkable combinatorial and dynamical properties see, for instance, \cite{arnoux1991representation, berstel1986fibonacci, cassaigne1997complexite, lothaire2002algebraic, zhi1994some}.\\

Given a finite word $u=u_0u_1\cdots u_{n-1}$, the corresponding {\it cylinder set} $[u]$ is defined by:
$$[w]=\{y\in \S : y_i=u_i\,,\,\,\forall\,\,\,0\leq i\leq n-1\}.$$ 
 A finite word $u$ is called a {\it return word} of the cylinder $[w]$, if the following conditions hold: 
 \begin{enumerate}\item $w$ is a prefix of $uw$, \item $w$ is not an inner factor of $uw$, \item $\min\bigl\{k: \s^k(ux)\in [w]\bigr\}=|u|$, for some $x\in [w]$. \end{enumerate}
We denote $\CR_{[w]}$ the set of return words to $[w]$.\\

For $x=x_{0}x_{1\ldots}\in \S$, we set
$$\de(x):=\max\{n:\,\forall\,\, k\le n, \quad \ x_{0}\ldots x_{k}\in L(\om)\}\le +\8.$$
Note that, $\de(x)=+\8$ if and only if $x\in \K$. In a similar way, if $u\in \S^{+}$ and $u\notin L(\om)$, then $\de(u)$ is the maximum length of common prefix of $u$ in $L(\om)$.
\\By definition:
\begin{equation*}
d(x,\K)=2^{-\de(x)-1}.
\end{equation*} 

We set $J=[000]$, then $\de(000)=2$ for all $x\in J$. Let $N$ be a positive integer such that $N>>2$ and $N-1\not= F_{n}-2$\footnote{The restriction on $N$ will be explained in Section \ref{sec-6}.}, for all $n\geq 2$. 

Let $A>0$, define a potential:
$$\varphi(x)=
\begin{cases}
 -\log\left(1+\frac1{\de(x)}\right) \text{ if }\de(x)\ge N,\\
 -A \text{ otherwise.}
\end{cases}
$$ 
For $\be\geq 0$, we set:
\begin{equation}
\label{eq-lambda}
\l_{\be}:=\sum_{u\in \CR_{J}}e^{\be S_{|u|}{\varphi(ux)}}
\end{equation}

where $x$ belongs to $J$. Note that, due to the form of $\v$, $\l_{\be}$ does not depend on the choice of $x$. 

(\cite{Bruin_2015}, Proposition 4.1) proved the existence of a parameter, $\be_{c}> 0$, such that $\l_{\be}<1$, for every $\be>\be_{c}$. From \cite[Theorem 4]{leplaideur2013local}, they got that a freezing phase transition occurs at $\be_{c}$: for any $\be>\be_{c}$, the unique equilibrium state for $\be\v$ is the unique invariant measure in $\K$.

Our main result is the following.

\begin{theorem}
\label{main-th}
Let $\be_c$ be the transition point of the pressure function in \cite[Theorem 2]{Bruin_2015}. Then $\be_{c}\not\in\, ]0,2]$.

 \end{theorem}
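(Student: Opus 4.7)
The plan is to prove $\l_2 > 1$, which will yield $\be_c > 2$ by the following reduction: since $\v \le 0$, each summand $e^{\be S_{|u|}\v(ux)}$ is non-increasing in $\be$, hence so is $\l_\be$; combining this with the characterization of $\be_c$ as the threshold where $\l_{\be_c} = 1$ implicit in \cite[Proposition~4.1]{Bruin_2015}, we obtain that $\l_2 > 1$ forces $\be_c > 2$.

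To produce the required lower bound on $\l_2$, I would isolate a Fibonacci-indexed family of return words. For each sufficiently large $n$, construct a return word $u_n = 000 \cdot v_n \in \CR_J$ whose interior $v_n$ is a suitably chosen suffix of the Fibonacci prefix $\om_n$ that starts and ends with the letter $1$ (so that $u_n$ has no extra occurrence of $000$); such $v_n$ exist for each parity of $n$ and yield $|u_n| = F_n + O(1)$. The key computation is the Birkhoff sum $S_{|u_n|}\v(u_n x)$. For each shift $k$ with $1 \le k \le |u_n| - 1$, the iterate $\s^k(u_n x)$ begins with a shorter suffix of $v_n$ followed by the block $000\ldots$, and $\de(\s^k(u_n x))$ is the length of the longest prefix of this word lying in $L(\om)$. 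Exploiting the Sturmian combinatorics of $\om$ (each factor admits at most two right extensions, with exactly one right-special factor of each length), one derives a uniform-in-$n$ estimate of the form $\de(\s^k(u_n x)) = F_n - k + O(1)$. The arithmetic hypothesis $N - 1 \ne F_n - 2$ ensures that the shift index at which $\de_k$ crosses below the threshold $N$ does not coincide with the boundary between $v_n$ and the trailing zero block of $x$, so that the boundary errors remain bounded. Telescoping the logarithmic contributions and absorbing the $O(N)$ indices falling in the $-A$ regime then yields
\[
S_{|u_n|}\v(u_n x) \;=\; -\log F_n \;+\; O(1),
\]
where the $O(1)$ error depends on $A$ and $N$ but not on $n$.

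Consequently $\l_2 \ge c(A,N) \sum_{n \ge n_0} F_n^{-2}$ for some positive constant $c(A,N)$; since Binet's formula gives $F_n \sim \phi^n/\sqrt{5}$ with $\phi$ the golden ratio, the tail $\sum F_n^{-2}$ converges to an explicit positive constant, and a direct arithmetic verification (using $N \gg 2$) yields $\l_2 > 1$, completing the argument. The principal technical obstacle is establishing the uniform estimate $\de(\s^k(u_n x)) = F_n - k + O(1)$: it rests on a delicate case analysis of right-extensions of Fibonacci suffixes inside $\om$, and the role of the arithmetic exclusion $N - 1 \ne F_n - 2$ is precisely to rule out the exceptional parameter values at which the regime transition would spoil the clean telescoping.
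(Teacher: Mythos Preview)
Your reduction is sound (monotonicity of $\l_\be$ together with $\l_{\be_c}=1$), but the lower bound you produce is too weak to close the argument. The quantity $c(A,N)\sum_{n\ge n_0}F_n^{-2}$ is \emph{finite} --- the series converges geometrically --- and the constant $c(A,N)$ hides a factor of order $e^{-cNA}$ coming from the free-zone contribution to the Birkhoff sum (your ``$O(1)$'' term absorbs roughly $N$ shifts at which $\v=-A$). The theorem is asserted for every $A>0$, so a bound of this form cannot force $\l_2>1$: taking $A$ large makes the right-hand side as small as one likes. Your remark that ``$N\gg 2$'' helps is in fact backwards, since larger $N$ only shrinks the prefactor. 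This is not a cosmetic defect; the paper stresses (see the Hofbauer comparison in the introduction) that the whole content of the result is that $\be_c>2$ \emph{regardless of} $A$, in contrast to the Hofbauer case where $\be_c$ can be pushed toward $1$ by increasing $A$.

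The paper overcomes this by engineering a lower bound that \emph{diverges} as $\be\downarrow 2$. Instead of one return word per Fibonacci index, it isolates for each bispecial word $W$ a family $\CD_W$ of return words having exactly two accidents in the excursion zone, both at $W$. The combinatorial core (Proposition~\ref{prop-CB}) shows that for each $k$ there are about $k$ distinct non-special identical bicephalic words of level $k$ for $W$, each contributing a factor of order $k^{-\be}$. This linear multiplicity produces the term $\sum_{k}(k-2)k^{-\be}=\zeta(\be-1)-2\zeta(\be)$ in the final lower bound, and $\zeta(\be-1)=+\infty$ for $\be\le 2$ overwhelms the $e^{-NA}$ prefactor whatever $A$ and $N$ are. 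Your single-trajectory-per-$n$ construction has multiplicity $O(1)$ and can therefore only manufacture convergent zeta-type sums. A secondary concern: the estimate $\de(\s^k(u_nx))=F_n-k+O(1)$ amounts to asserting that no accidents occur during the excursion, but accidents arise precisely where bispecial factors are met (Proposition~\ref{prop-accident bispecial}), and these are ubiquitous in the Fibonacci combinatorics; even if you engineered $v_n$ to avoid them, that would only confirm the low multiplicity that dooms the approach.
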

 
 The idea of the proof is to find a lower bound of $\l_{\be}$, that involves the identity $\zeta(\be-1)-2\zeta(\be)$, where $\zeta(\be)$ is the Riemann zeta function. Because for $\be>\be_{c}$ we have $\l_{\be}<1$, we will deduce from our lower bound that $\be_{c}$ has to be bigger than 2.

 \medskip
 We remind that for Hofbauer's case, at the transition point $\be_{c}$, equality $$\l_{\be_c}=e^{-A}\zeta({\be_c})=1,$$
 holds
 (see, \cite{leplaideur2013local}, Example 1). Therefore, the larger the value of $A$ is, the closer to 1 $\be_{c}$ is. Our result shows that this phenomenon cannot occur for the Fibonacci case. As we said above, we explain this by the fact that a quasi-crystal generates in its neighbourhood more chaos than a single fixed point (or even a periodic case). This is illustrated by the notion of accident defined in \cite{Bruin_2013, Bruin_2015}. 
 Therefore computing $\l_{\be_{c}}$ is not as easy as in Hofbauer's case (see,\cite[section 2]{ Bruin_2015}.  This is thus a challenge to develop tools in ergodic theory to allow a study in depth of different chaos at zero entropy scale. 
 
\subsection{Outline.}
\label{sec-outline}
The outline of the article is as follows:\\

In Section 2, we first recall some results of the Fibonacci words associated with special words and Rauzy graph. Section 2 is about the configuration of the Fibonacci word for a bispecial word (fixed) and its corresponding factor loops defined in \cite{arnoux1991representation}. The main result of this section is the proposition \ref{main-sec 2}, that plays a vital role to identify the identity $\zeta(\be-1)-2\zeta(\be)$ in the lower bound of $\l_{\be}$. At the end of section 2, we will define new words; Bicephalic words, and some basics of bicephalic words used in our final computation.\\
In Section 3, we will prove our main theorem, and find a lower found for $\l_{\be}$ in our final computation.

\section{Configuration of Fibonacci word.}
\label{sec-2}
Before describing the main section aim, we first will write about special words and the Rauzy graphs. We refer \cite{arnoux1991representation, cassaigne1997complexite,rauzy1982suites} for this section.
 
\subsection{Special words and Rauzy graphs.}
\label{sec-2.1}



The following definitions are from \cite{lothaire2002algebraic}.
\begin{definition}
The mirror of a word $u=u_0u_1\cdots u_{n-1}$, is the word $\tilde{u}=u_{n-1}u_{n-2}\cdots u_{1}$. A palindrome word is a word $u$ such that $\tilde{u}=u$. \\
For two words $u=u_0u_1\cdots u_{m-1}$ and $v=v_0v_1\cdots v_{n-1}$, the concatenation of $u$ and $v$ is the word $uv=u_0u_1\cdots u_{m-1}v_0v_1\cdots v_{n-1}$.
\end{definition}
We refer \cite{cassaigne1997complexite} for the following definition.
\begin{definition}
\label{def-special}
A finite word $u\in L(\om)$ is called a right extendable word in the Fibonacci word $\om$, if there exists a binary digit $c\neq\epsilon$ such that $uc\in L(\om)$. In that case, $c$ is called a right extension of $u$ in $\om$. 

Similarly, $u$ is called a left extendable word, and there exists $c\neq \epsilon$ such that $cu\in L(\om)$. In that case $c$  is called a left extension of $u$ in $\om$. 

A right extendable word is said to be {\it right special} if it has exactly two right extensions of length 1. 

A left extendable word  is said to be {\it left special} if it has exactly two left extensions of length 1. 

A word is called a bispecial word if it is right  and left special.
\end{definition}
We can visualise extensions (right and left) and special words through the Rauzy graph of  $\om$. The Rauzy graph of order $n$, $\G_{n}(\om)$ is a direct graph defined as follows: $\G_{n}(\om)$ has $n+1$ vertices labelled with elements of the set $L_{n}(\om)$, and there is an edge from vertex $u$ to vertex $v$ if, and only if, there exist two digits $a,b\in \{0,1\}$ such that $ua=bv\in L_{n+1}(\om)$, we label the edge by $u\overset{a}{\underset{b}{\longrightarrow}}v$.
For every $n\in \N$; $L_{n}(\om)$ has exactly one right special word, and exactly one left special word. All other words in $L_{n}(\om)$ have a unique right and left extension in $\om$. Therefore, the graph $\G_{n}(\om)$ has $n+2$ edges (see \cite[section 1]{arnoux1991representation}). We denote respectively by $i(u)$ and $o(u)$ the indegree and outdegree of a vertex $u$ in $\G_{n}(\om)$.
The Rauzy graph of order $n$ has exactly one vertex labelled with $A_{n}$ such that $o(A_{n})=2$, and has exactly one vertex labelled with $B_{n}$ such that $i(B_{n})=2$ (see \cite[section 1]{arnoux1991representation}).

Vuillon (see \cite[Section 4]{vuillon2001characterization}) observed that the Rauzy graph $\G_{n}(\om)$ is composed of three paths; the first two paths are as follows:
  
 \begin{equation}
 \label{eq-shape1}
\begin{cases}
 A_n\overset{a_{1}}{\underset{a_{k}}{\longrightarrow}}W_1{\overset{a_{2}}{\underset{a_{k-1}}\longrightarrow}}W_2\cdots {\overset{a_{k-1}}{\underset{a_{2}}\longrightarrow}}W_{k-1}{\overset{a_{k}}{\underset{a_{1}}\longrightarrow}}B_{n},\\ 
 
 A_n\overset{b_{1}}{\underset{b_{l}}{\longrightarrow}}U_1{\overset{b_{2}}{\underset{b_{l-1}}\longrightarrow}}U_2\cdots {\overset{b_{l-1}}{\underset{b_{2}}\longrightarrow}}U_{l-1}{\overset{b_{l}}{\underset{b_{1}}\longrightarrow}}B_{n},
\end{cases}
\end{equation}
 
$a_{1}\not= b_{1}$ and $k\not=l$, and the third path from $B_{n}$ to $A_{n}$ is 

\begin{equation}
\label{eq-shape2-2}
B_n\overset{c_{1}}{\underset{c_{s}}{\longrightarrow}}V_1{\overset{c_{2}}{\underset{c_{s-1}}\longrightarrow}}V_2\cdots {\overset{c_{s-1}}{\underset{c_{2}}\longrightarrow}}V_{s-1}{\overset{c_{s}}{\underset{c_{1}}\longrightarrow}}A_{n}.
\end{equation}
Furthermore, if $A_{n}=B_{n}$,  then $c_1c_2\cdots c_{s}=\epsilon$ in (\ref{eq-shape2-2}), and $\G_{n}(\om)$ consists of two disjoint loops around the vertex labelled by $A_{n}$(see \cite{arnoux1991representation}).

The following definition is from \cite[section 1]{arnoux1991representation}.
\begin{definition}
\label{def:FL}
Let $A_{n}=B_{n}$. Then the two factors $a_1a_2\cdots a_{k}$ and $b_{1}b_{2}\cdots b_{l}$ in (\ref{eq-shape1}) are called factor loops of the bispecial word $A_n$.
\end{definition}

Bispecial words are palindrome prefixes of $\om$ with length $F_{p}-2$, $p\geq 2$. Moreover, if a bispecial word has length $F_{p}-2$, then the associated factor loops have length $F_{p-1}$ and $F_{p-2}$\cite[Section 2]{Bruin_2015}. 

We fixed $p\geq2$, let $W$ be the bispecial word with $|W|=F_{p}-2$, and let $R_1,\,R_{2}$ are factor loops of $W$ such that $|R_1|=F_{p-1}$, $|R_2|=F_{p-2}$. We repeat that, in the Rauzy graph $\G_{|W|}(\om)$, the vertex $W$ is connected with two disjoint loops. From \cite[section 1]{arnoux1991representation} a finite word associated with a random walk; starts from $W$, end at $W$, after visiting $n$ times to vertex $W$, can be represented as, $$WR_{i_1}R_{i_2}R_{i_3}\cdots R_{i_{n-1}},\,\,\,\,i_j\in\{1,2\}\,,\,\,\,\,\,\,\text{for\,all} \,\,\,\,\,\,\,\,\,\,1\leq j\leq n-1.$$ For each $n\geq1$, here two questions arise naturally; 
\begin{enumerate}
\item What is the order(arrangement) of $R_{i_1}R_{i_2}R_{i_3}\cdots R_{i_{n-1}}?$
\item What is the $card\bigl\{R_{i_1}R_{i_2}R_{i_3}\cdots R_{i_{n-1}}: WR_{i_1}R_{i_2}R_{i_3}\cdots R_{i_{n-1}}\in L(\om)\bigr\}?$
\end{enumerate}
The aim of the section is to provide answers to questions mentioned above. \\
For each $n\geq 1$, let us define $$L_{n}(W)=\bigl\{R_{i_1}R_{i_2}R_{i_3}\cdots R_{i_n}: WR_{i_1}R_{i_2}R_{i_3}\cdots R_{i_n}\in L(\om), i_j=1,2,\,\,\,\forall \,\,1\leq j\leq n\bigr\}.$$
We  also define a block-digit code $\phi$, and its inverse $\phi^{-1}$ as follows,

\begin{equation}
\label{eq-phi}
\begin{cases}
 R_1\overset{\phi}{\longrightarrow}0,\\
 R_2\overset{\phi}{\longrightarrow}1,
\end{cases}\quad\quad\quad\quad
\begin{cases}
 0\overset{\phi^{-1}}{\longrightarrow}R_1,\\
 1\overset{\phi^{-1}}{\longrightarrow}R_2.
\end{cases}
\end{equation}
\\
The main result in this section is the following:
\begin{proposition}
\label{main-sec 2}
 \begin{enumerate}
For each $n\geq 1$ the following hold:
\item $L_n(\om)=\phi(L_{n}(W))$,
\item  $\phi^{-1}(L_n(\om))=L_{n}(W)$.
\end{enumerate}
\end{proposition}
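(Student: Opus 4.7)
Since $\phi$ is a bijection from $\{R_1,R_2\}$ onto $\{0,1\}$, its natural extension is a bijection $\{R_1,R_2\}^n\to\{0,1\}^n$, so assertions (1) and (2) are equivalent and it suffices to prove (1). My plan is to establish the following stronger structural fact, from which the proposition is immediate: reading the factor loops of $W$ in the order they occur along $\om$, starting from the initial prefix $W$, produces a one-sided infinite sequence over $\{R_1,R_2\}$ whose $\phi$-image is exactly $\om$ itself. Granted this, the length-$n$ factors of $\om$ and the admissible blocks $R_{i_1}\cdots R_{i_n}\in L_n(W)$ are in one-to-one correspondence via $\phi$, which yields both inclusions in (1).

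The structural fact I would prove by induction on the bispecial level $p$ (recall $|W|=F_p-2$), using the invariance of $\om$ under the Fibonacci substitution $0\mapsto 01$, $1\mapsto 0$. The base case corresponds to a low-level bispecial word (for instance $W=\epsilon$, where $R_1=0$ and $R_2=1$ are literally the two letters), for which the claim holds trivially. For the inductive step one observes that, since $|R_1|=F_{p-1}=|\om_{p-1}|$ and $|R_2|=F_{p-2}=|\om_{p-2}|$, the loops $R_1$ and $R_2$ are, up to the palindromic conjugation imposed by $W$, the images of $0$ and $1$ under the $(p-1)$-fold iterate of the Fibonacci substitution. The recursion $\om_{p+1}=\om_p\om_{p-1}$ then propagates the level-$(p-1)$ return-word coding to the level-$p$ coding and the induction closes; this also fixes the labelling convention $\phi(R_1)=0$, $\phi(R_2)=1$ consistently with the fact that $\om$ begins with $0$ and that the first loop traversed corresponds to the longer block of length $F_{p-1}$.

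The main obstacle is the delicate bookkeeping in the inductive step: one must verify that the correspondence between loops and letters respects the chosen labelling and not a swapped one, and that the initial alignment of $W$ as a prefix of $\om$ is compatible with starting the coding at the first occurrence of $W$. Once the structural fact is in hand, both inclusions $\phi(L_n(W))\subseteq L_n(\om)$ and $L_n(\om)\subseteq\phi(L_n(W))$ are immediate: every admissible block of consecutive return words occurring in $\om$ maps under $\phi$ to a length-$n$ factor of $\om$, and conversely every length-$n$ factor of $\om$ arises as such a block of loops by the structural fact.
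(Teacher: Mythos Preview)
Your target is exactly right: the heart of the matter is the structural fact that the $\phi$-image of the infinite loop-sequence read off from $\om$ (starting at the prefix $W$) is $\om$ itself. The paper proves precisely this, writing the loop-sequence as $\Omega_W=\prod_{n\ge0}R_1^{(n)}$ and establishing $\phi(\Omega_W)=\om$; from there both inclusions in (1) follow as you say.

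Where you and the paper diverge is in how that structural fact is obtained. You propose an induction on the bispecial level $p$, descending via the Fibonacci substitution. The paper instead works \emph{upward}: it builds the hierarchy of factor loops $R_1^{(n)},R_2^{(n)}$ at the higher bispecial words $W^{(n)}$, proves the explicit recursion $R_1^{(n)}=R_2^{(n-1)}R_1^{(n-1)}$, $R_2^{(n)}=R_1^{(n-1)}$ (Lemma~\ref{cor-F-1}), deduces $\phi(R_1^{(n)})=\widetilde{\om_n}$ (Lemma~\ref{prop-decomp}), and then invokes the known factorization $\om=\prod_{n\ge0}\widetilde{\om_n}$ from the literature. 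This buys the paper a clean, non-inductive conclusion at the cost of importing an external result; your route would be self-contained but requires carrying out the bookkeeping you yourself flag as the obstacle.

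Two cautions if you pursue your route. First, your claim that $R_1,R_2$ are ``the images of $0$ and $1$ under the $(p-1)$-fold iterate of the Fibonacci substitution'' is not literally true: the lengths match, but the loops are conjugates of $\om_{p-1},\om_{p-2}$, not the words themselves. The paper's appearance of mirrors $\widetilde{\om_n}$ rather than $\om_n$ is exactly this phenomenon, and your phrase ``up to the palindromic conjugation imposed by $W$'' would need to be made precise for the induction to close. Second, the base case $W=\epsilon$ lies outside the paper's standing convention $p\ge2$; you would want to start at $p=2$ (so $W=0$) and check the labelling there directly. As written, your proposal is a correct strategy but not yet a proof: the inductive step is where all the content sits, and you have described it rather than executed it.
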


\subsection{Proof of Proposition \ref{main-sec 2}.}
\label{sec-2.2}
The proof of proposition \ref{main-sec 2} is inspired by the technique used in \cite[section 1]{arnoux1991representation}.\\

 We repeat that $W$ is the bispecial word, and $|W|=F_{p}-2$, $p\geq2$. Moreover $R_1$, $R_2$ are factor loops of $W$, and $|R_1|=F_{p-1}$, $|R_2|=F_{p-2}$. \\Let $n\geq1$, we denote by $W^{(n)}$ be the bispecial word, with $|W^{(n)}|=F_{p+n}-2$, and $R_1^{(n)}$, $R_2^{(n)}$ are factor loops of $W^{(n)}$, such that $|R_1^{(n)}|=F_{p+n-1}, |R_2^{(n)}|=F_{p+n-2}$.

\begin{lemma}
\label{cor-F-1}
For each $n\geq 1$, the bispecial word $W^{(n)}$ is in the following form, 
\begin{equation}
\label{eq1}
W^{(n)}=WR_{1}^{(0)}R^{(1)}_1R^{(2)}_1\cdots R^{(n-1)}_1.
\end{equation}
Furthermore, the associated factor loops of $W^{(n)}$ are 

\begin{equation}
\label{eq-corF1}
R_1^{(n)}=R_2^{(n-1)}R_1^{(n-1)},\quad \text{and} \quad R_2^{(n)}=R_1^{(n-1)},
\end{equation}
with the intial condition $R_1^{(0)}=R_1$.

\end{lemma}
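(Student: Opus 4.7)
The plan is to prove both claims by induction on $n$. The essential content is confined to the base case $n=1$, where I must establish $W^{(1)}=WR_1$, $R_1^{(1)}=R_2R_1$, and $R_2^{(1)}=R_1$. Once this is done for a generic starting bispecial, the induction step is automatic: the base case applied to $(W^{(n)},R_1^{(n)},R_2^{(n)})$ in place of $(W,R_1,R_2)$ yields the three identities at level $n+1$, and substituting the inductive formula for $W^{(n)}$ into $W^{(n+1)}=W^{(n)}R_1^{(n)}$ gives the telescoping product.

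For the identity $W^{(1)}=WR_1$, I would use the classical Fibonacci recursion $\om_{p+1}=\om_p\om_{p-1}$ together with the identification of $W_p$ as $\om_p$ stripped of its last two letters. This immediately yields $W^{(1)}=W\cdot Y$ for a word $Y$ of length $F_{p-1}$. Since $W^{(1)}$ is a palindrome admitting $W$ as palindromic prefix, $W$ is also a suffix of $W^{(1)}$, so $Y$ corresponds to a closed walk at $W$ in the Rauzy graph $\G_{|W|}(\om)$. In this graph the bispecial $W$ carries the only two simple loops $R_1$ (length $F_{p-1}$) and $R_2$ (length $F_{p-2}$), with every other vertex having in-degree and out-degree one. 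For $p\geq 3$ one has both $\gcd(F_{p-1},F_{p-2})=1$ and $F_{p-1}<2F_{p-2}$, so the only decomposition of a closed walk of length $F_{p-1}$ into such loops is $R_1$ itself; the residual case $p=2$ is settled by noting that the only alternative walk $R_2R_2$ would yield $WR_2R_2=000$, which does not appear in $\om$.

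For the factor loop identities, I would lift an arbitrary factor loop $R$ of $W^{(1)}$ to a closed walk at $W$ in $\G_{|W|}(\om)$, via the factor $W^{(1)}R=WR_1R$. This walk decomposes as a concatenation of simple loops from $\{R_1,R_2\}$, and the conditions that $R$ be a simple cycle at $W^{(1)}$ translate cleanly: the first loop must be $R_1$, since the factor begins with $WR_1$; the last loop must be $R_1$, since the factor ends with $W^{(1)}=WR_1$ and its final $|R_1|$ edges therefore trace out the $R_1$-loop; and no intermediate loop may equal $R_1$, since an interior $R_1$-loop at a visit to $W$ would create an interior occurrence of the pattern $WR_1=W^{(1)}$, contradicting the simplicity of the cycle. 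The decomposition is thus forced to be $R_1\cdot R_2^m\cdot R_1$, giving $R=R_2^m R_1$; combined with the known length constraint $|R|\in\{F_{p-1},F_p\}$ from \cite[Section~2]{Bruin_2015}, only $m=0$ (yielding $R_2^{(1)}=R_1$) and $m=1$ (yielding $R_1^{(1)}=R_2R_1$) are admissible.

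The main subtle point is the intermediate-loop argument, where I must ensure that interior occurrences of $W^{(1)}=WR_1$ in the walk cannot arise accidentally from an $R_2$-loop. This rests on the fact that $R_1$ and $R_2$ begin with different letters --- they encode the two distinct right extensions of the right-special word $W$ --- so a window starting with $WR_2\cdots$ can never coincide with $WR_1$. A secondary delicate point is the case $p=2$ in the first identity, where the coprimality argument fails (since $F_1=2F_0$) and one must instead invoke the absence of $000$ in $\om$ to rule out the spurious walk.
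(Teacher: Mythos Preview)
Your proof is correct and complete, but it takes a genuinely different route from the paper's.

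The paper establishes the base case by tracing the sequence of Rauzy graphs $\Gamma_{|W|+1},\Gamma_{|W|+2},\ldots$ step by step, following how the right-special and left-special vertices evolve until they merge again at level $|W|+F_{p-1}$; this identifies $W^{(1)}=WR_1$. For the factor-loop recursion $R_1^{(1)}=R_2R_1$ and $R_2^{(1)}=R_1$, the paper simply invokes \cite[Section~1, Cases~I and~II]{arnoux1991representation} rather than arguing directly.

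Your approach is more self-contained. For $W^{(1)}=WR_1$ you bypass the graph-tracing entirely: the Fibonacci recursion $\om_{p+1}=\om_p\om_{p-1}$ gives $W^{(1)}=WY$ with $|Y|=F_{p-1}$, the palindrome property forces $Y$ to be a closed walk at $W$, and Fibonacci arithmetic (coprimality of $F_{p-1},F_{p-2}$) pins down $Y=R_1$ --- with the residual case $p=2$ handled by the absence of $000$ in $\om$. For the factor loops, you lift a loop $R$ of $W^{(1)}$ to a closed walk $R_1R$ at $W$ in $\Gamma_{|W|}$ and analyse its decomposition into $R_1,R_2$'s directly, using that occurrences of $W$ in the walk coincide with loop boundaries and that $R_1,R_2$ begin with distinct letters. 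This yields $R=R_2^mR_1$ and the length constraint then forces $m\in\{0,1\}$.

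Both arguments are valid. The paper's version is shorter but leans on an external reference for the substantive recursion; yours is longer but derives everything from first principles, which makes the mechanism (why exactly the loops compose as $R_2R_1$ rather than $R_1R_2$) more transparent. One small remark: the inequality $F_{p-1}<2F_{p-2}$ you mention is not actually needed for the uniqueness of the decomposition --- coprimality together with $F_{p-2}>1$ already rules out $a=0$ for $p\ge3$ --- but its inclusion does no harm.
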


\begin{proof}
We will prove by induction. 
\begin{enumerate}
\item Step I: For $n=1$,
since $W$ is bispecial, therefore, the two disjoint loops around vertex $W$ in the Rauzy graph $\G_{|W|}(\om)$ is as follows:
$$ W \overset{a_{1}}{\underset{a_{k}}{\longrightarrow}}\cdots \overset{a_{k}}{\underset{a_{1}}{\longrightarrow}} W,\,\,\,\,\,\text{and} \,\,\,\,\, W \overset{b_{1}}{\underset{b_{l}}{\longrightarrow}}\cdots \overset{b_{l}}{\underset{b_{1}}{\longrightarrow}} W,$$ $a_1\not= b_1$ and $l\not=k$ (see \eqref{eq-shape1}). With no loss of generality, assume that $Wa_1$ and $a_1W$ are the labelled vertices in $\G_{|W|+1}(\om)$, such that $o(a_1W)=i(Wa_1)=2$. Since $W{a_1}\not=a_{1}W$, otherwise, $Wa_1$ is a bispecial word, that is not possible (see \cite[section 1, case II]{arnoux1991representation}). 

The third path in the graph $\G_{|W|+1}(\om)$ is as follows:

$$ Wa_1\overset{\alpha_{2}}{\longrightarrow}\cdots {\underset{\alpha_{2}}{\longrightarrow}} a_1W,$$(see \cite[section 1, case I]{arnoux1991representation}).
Again, $Wa_1a_2\not=a_2a_1W$, and $o(a_2a_1W)=i(Wa_1a_2)=2$, therefore, the third path from vertex $Wa_1a_2$ to vertex $a_2a_1W$ in $\G_{|W|+2}(\om)$ is as follows:
$$Wa_1a_2\overset{a_{3}}{\longrightarrow}\cdots {\underset{a_{3}}{\longrightarrow}}a_2a_1W.$$

Continuing the process, until we reach $\G_{m+(k-1)}(\om)$ such that, 
$$ Wa_{1}a_{2}\cdots a_{k-1} \overset{a_{k}}{\underset{a_{k}}{\longrightarrow}} a_{k-1}\cdots a_{2}a_{1}W.$$ By definition, $Wa_{1}a_{2}\cdots a_{k-1}a_{k}=a_{k}a_{k-1}\cdots a_{2}a_{1}W$ with $o(a_{k}a_{k-1}\cdots a_{2}a_{1}W)=i(Wa_{1}a_{2}\cdots a_{k-1}a_{k})=2$. Therefore, $Wa_{1}a_{2}\cdots a_{k-1}a_{k}$ is the bispecial word with length $F_{p+1}-2$ and $k=F_{p-1}$.

From \cite[section 1, case I, II]{arnoux1991representation}, we find directly $R_1^{(1)}=R_2R_1\quad and \quad R_2^{(1)}=R_1$.

\item Step II: Let $k\geq 1$ be given and suppose (\ref{eq1}) and (\ref{eq-corF1}) are true for $n=k$. Then, from Step I, $$W^{(k+1)}=W^{(k)}R_{1}^{(k)}=WR_{1}^{(0)}R^{(1)}_1R^{(2)}_1\cdots R^{(k-1)}_1R_{1}^{(k)}.$$ Moreover, from \cite[section 1, case I, II]{arnoux1991representation}, $R_1^{(k+1)}=R_2^{(k)}R_1^{(k)},\quad and \quad R_2^{(k+1)}=R_1^{(k)}$. Thus (\ref{eq1}) and (\ref{eq-corF1}) hold for $n=k+1$, and the proof of induction step is complete.
\end{enumerate}
\end{proof}
The following corollaries are from \cite{lothaire2002algebraic} and are the immediate consequences of Lemma \ref{cor-F-1}.
\begin{corollary}
\label{cor-PS}
Let $W^{(n)}$ be the bispecial word with length $F_{p+n}-2$. Then $W^{(k)}$ , for all $0\leq k\leq n$, are the palindrome prefixes and the palindrome suffixes of $W^{(n)}$.
\end{corollary}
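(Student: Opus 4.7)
The plan is to combine two facts already stated in the excerpt: every bispecial word is a palindrome prefix of $\om$, and the recursive decomposition $W^{(k+1)}=W^{(k)}R_1^{(k)}$ coming from Lemma \ref{cor-F-1}. The first gives palindromicity of each $W^{(k)}$, and the second gives that $W^{(k)}$ is a prefix of $W^{(n)}$ for every $0\le k\le n$. Upgrading ``prefix'' to ``suffix'' is then a purely combinatorial manipulation on palindromes, which is the only nontrivial step, but it is immediate.

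First I would record that, by Lemma \ref{cor-F-1}, iterating the identity $W^{(k+1)}=W^{(k)}R_1^{(k)}$ for $k=0,\ldots,n-1$ yields
\[
W^{(n)} = W^{(k)}\,R_1^{(k)} R_1^{(k+1)}\cdots R_1^{(n-1)},
\]
so $W^{(k)}$ is a prefix of $W^{(n)}$ for every $0\le k\le n$. Since each $W^{(j)}$ is a bispecial factor of $\om$, the earlier statement (``Bispecial words are palindrome prefixes of $\om$ with length $F_{p}-2$'') immediately gives that each $W^{(k)}$ and $W^{(n)}$ are palindromes.

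Next I would deduce the suffix property. Writing $\tilde{v}$ for the mirror of $v$, the relation $\tilde{W^{(n)}}=W^{(n)}$ means that the mirror of any prefix of $W^{(n)}$ of length $\ell$ is a suffix of $W^{(n)}$ of length $\ell$. Applying this to $W^{(k)}$, which is a prefix of length $F_{p+k}-2$, we get $\tilde{W^{(k)}}$ as a suffix of $W^{(n)}$; but $\tilde{W^{(k)}}=W^{(k)}$ since $W^{(k)}$ is itself a palindrome. Hence $W^{(k)}$ is a palindrome suffix of $W^{(n)}$ as required.

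The only step that is not completely formal is appealing to the palindromicity of each $W^{(k)}$ and of $W^{(n)}$, and I do not expect any obstacle there: it is exactly the content of the quoted classical fact about bispecial factors of $\om$. So the proof is really a two-line argument organised around the prefix/palindrome duality.
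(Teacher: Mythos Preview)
Your proof is correct and matches the paper's approach: the paper simply declares the corollary to be an immediate consequence of Lemma~\ref{cor-F-1} (together with the cited fact that bispecial factors of $\om$ are palindromes), and you have spelled out exactly those details. The prefix-to-suffix step via the palindrome duality is the natural way to complete the argument.
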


\begin{corollary}
\label{cor-sp-fac}
Let $V$ be a purely left special word, and $|V|\geq |W|$. Then there exists a $k\geq 1$, such that, $V$ is a prefix of the bispecial word $WR_{1}^{(0)}R^{(1)}_1R^{(2)}_1\cdots R^{(k)}_1$, and the bispecial word $WR_{1}^{(0)}R^{(1)}_1R^{(2)}_1\cdots R^{(k-1)}_1$ is a prefix of $V$. 

Similarly, let $U$ be a purely right special word, and $|U|\geq |W|$. Then there exists a $k\geq 1$, such that, $U$ is a suffix of the bispecial word $WR_{1}^{(0)}R^{(1)}_1R^{(2)}_1\cdots R^{(k)}_1$, and the bispecial word $WR_{1}^{(0)}R^{(1)}_1R^{(2)}_1\cdots R^{(k-1)}_1$ is a suffix of $U$.
\end{corollary}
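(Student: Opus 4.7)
The plan is to combine the nested-by-prefix structure of the left-special factors of $\om$ with the explicit description of the bispecial prefixes from Lemma \ref{cor-F-1}. The paper has already observed that, for every $n \ge 1$, $L_n(\om)$ contains exactly one left-special factor $B_n$ and exactly one right-special factor $A_n$. A short Rauzy-graph argument shows that the sequence $(B_n)_{n \ge 1}$ is nested by prefix: the two left-extensions of $B_{n+1}$, after dropping their last letter, restrict to two left-extensions of the length-$n$ prefix of $B_{n+1}$ that remain distinct since they differ in their first letter; by uniqueness that prefix must equal $B_n$. Every bispecial factor is simultaneously left- and right-special, so the bispecial members of the chain occur precisely at the lengths $F_p - 2$, and by Lemma \ref{cor-F-1} they are exactly $W^{(0)}, W^{(1)}, W^{(2)}, \ldots$ with $W^{(m)} = W R_1^{(0)} R_1^{(1)} \cdots R_1^{(m-1)}$.

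For a purely left-special $V$ with $|V| \ge |W|$, uniqueness forces $V = B_{|V|}$; since $V$ is not bispecial, $|V|$ lies strictly between two consecutive bispecial lengths, $F_{p+k} - 2 < |V| < F_{p+k+1} - 2$ for some $k \ge 1$. Nestedness of $(B_n)$ then yields that $W^{(k)} = B_{F_{p+k}-2}$ is a prefix of $V$ and $V$ is a prefix of $W^{(k+1)} = B_{F_{p+k+1}-2}$, which is the left-special half of the corollary. For a purely right-special $U$ with $|U| \ge |W|$, I would apply the mirror argument: by mirror-invariance of the Fibonacci language, $\wt{U}$ is purely left-special of length $|U|$, so the preceding paragraph yields prefix inclusions $W^{(k)} \prec \wt{U} \prec W^{(k+1)}$; mirroring these and invoking the palindromicity $\wt{W^{(j)}} = W^{(j)}$ from Corollary \ref{cor-PS} converts them into the required suffix inclusions for $U$.

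The main obstacle I anticipate is establishing the nested-by-prefix property $B_n \prec B_{n+1}$ of left-special factors; the sketched argument is short but relies on the verification that after truncating the last letter of the two left-extensions of $B_{n+1}$ the resulting length-$(n+1)$ factors remain distinct (which holds because they already differ in their first letter). Beyond this, the only delicate point is the mirror argument for the right-special half, where one must combine the mirror-invariance of $L(\om)$ (to reduce to the left-special case) with the palindromicity of Corollary \ref{cor-PS} (to convert the resulting prefix inclusions into suffix inclusions). Neither step is individually difficult, but combining them correctly requires care.
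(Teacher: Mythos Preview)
The paper does not give a self-contained proof of this corollary: it simply records that the two corollaries ``are from \cite{lothaire2002algebraic} and are the immediate consequences of Lemma \ref{cor-F-1}.'' Your proposal supplies exactly the kind of argument one would expect behind that sentence, and it is correct. The key ingredients you isolate---uniqueness of the left-special factor in each $L_n(\om)$, the prefix-nestedness $B_n \prec B_{n+1}$ of the left-special chain, the identification of the bispecial members of that chain via Lemma \ref{cor-F-1}, and the mirror reduction using palindromicity from Corollary \ref{cor-PS}---are the standard ones, and your sketch of the nestedness step (truncating the two left-extensions of $B_{n+1}$ to obtain two distinct left-extensions of its length-$n$ prefix) is fine.

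One small index point: you write ``$F_{p+k}-2 < |V| < F_{p+k+1}-2$ for some $k \ge 1$,'' but if $|W| < |V| < |W^{(1)}|$ this forces $k=0$. The statement as printed has the same off-by-one (for $k=1$ the lower prefix is already $WR_1^{(0)}=W^{(1)}$, not $W$), so this is not a flaw in your reasoning; just be aware that the natural range is $k \ge 0$, with the convention that the empty product of $R_1^{(j)}$'s gives $W$ itself. Apart from this cosmetic issue, your argument is complete and matches the intended route.
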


\begin{lemma}
\label{prop-decomp}
For every $n\geq 1$;

\begin{enumerate}
\item $\phi(R_1^{(n)})=\widetilde{\om_n},$
\item $\phi^{-1}\widetilde{(\om_{n})}=R_1^{(n)},$
\end{enumerate}

where $\phi$ is defined in \eqref{eq-phi}, and $\widetilde{\om_n}$ is the mirror of the $nth$ Fibonacci word.
\end{lemma}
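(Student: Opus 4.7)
The approach I would take is a direct induction on $n\geq 0$, in which two "swap-and-concatenate'' recursions line up: the one for the factor loops from Lemma~\ref{cor-F-1}, and the one for the mirror Fibonacci words. Starting from $\om_n=\om_{n-1}\om_{n-2}$ and taking mirrors yields
$$\widetilde{\om_n}=\widetilde{\om_{n-2}}\,\widetilde{\om_{n-1}}\qquad(n\geq 2),$$
while substituting $R_2^{(n-1)}=R_1^{(n-2)}$ into $R_1^{(n)}=R_2^{(n-1)}R_1^{(n-1)}$ gives the parallel rule
$$R_1^{(n)}=R_1^{(n-2)}\,R_1^{(n-1)}\qquad(n\geq 2).$$
These two recurrences have exactly the same shape, and are tautologically compatible with any morphism sending $R_1\mapsto 0$ and $R_2\mapsto 1$.

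First I would make the block code $\phi$ unambiguous on $R_1^{(n)}$: iterating the substitutions $R_1^{(k)}=R_2^{(k-1)}R_1^{(k-1)}$, $R_2^{(k)}=R_1^{(k-1)}$ down to level $0$ produces a unique factorisation of $R_1^{(n)}$ into blocks $R_1$ and $R_2$, so $\phi(R_1^{(n)})$ is a well-defined binary word of length $F_{n+1}$. The induction then begins with the base cases $\phi(R_1^{(0)})=\phi(R_1)=0=\widetilde{\om_0}$ and $\phi(R_1^{(1)})=\phi(R_2R_1)=10=\widetilde{\om_1}$. For the inductive step, assuming the identity for indices smaller than $n$, I would apply $\phi$ to both sides of $R_1^{(n)}=R_1^{(n-2)}R_1^{(n-1)}$ and combine the induction hypothesis with the mirror Fibonacci recursion to conclude
$$\phi(R_1^{(n)})=\phi(R_1^{(n-2)})\,\phi(R_1^{(n-1)})=\widetilde{\om_{n-2}}\,\widetilde{\om_{n-1}}=\widetilde{\om_n},$$
which is part (1). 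Statement (2) is then immediate, since $\phi$ is by construction a bijective block-to-letter code whose inverse $\phi^{-1}$ is given in~(\ref{eq-phi}); applying $\phi^{-1}$ to the equality of (1) yields $\phi^{-1}(\widetilde{\om_n})=R_1^{(n)}$.

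I do not anticipate any genuine obstacle. The only delicate point is the well-definedness of $\phi(R_1^{(n)})$, and this is entirely controlled by the unique unfolding of the recurrence in Lemma~\ref{cor-F-1}. The rest is a bookkeeping verification that two recurrences of identical structure, with matching initial data $\{0,10\}$, produce the same sequence of finite words.
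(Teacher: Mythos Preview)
Your proposal is correct and follows essentially the same induction as the paper: both arguments check the base cases $\phi(R_1^{(0)})=0$ and $\phi(R_1^{(1)})=10$, then push through the recursion $R_1^{(n)}=R_2^{(n-1)}R_1^{(n-1)}=R_1^{(n-2)}R_1^{(n-1)}$ from Lemma~\ref{cor-F-1} against the mirror recursion $\widetilde{\om_n}=\widetilde{\om_{n-2}}\,\widetilde{\om_{n-1}}$. The only cosmetic difference is that the paper redoes part~(2) by a parallel induction, whereas you obtain it immediately from~(1) via the bijectivity of the block code; your shortcut is perfectly valid.
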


\begin{proof}
The proof is done by induction.
\begin{enumerate}
\item \begin{enumerate}
\item For $n=0$, we have $\phi(R_{1})=0=\widetilde{\om_0}$.\\ For $n=1$, from \eqref{eq-corF1}, $\phi(R^{(1)}_{1})=\phi(R_{2}R_{1})=\phi(R_{2})\phi(R_{1})=10=\widetilde{\om_1}$. 
\item Assume that  $\phi(R_1^{(k)})=\widetilde{\om_k}$, for all value $k$ such that $0\leq k\leq n$. Then from\eqref{eq-corF1}, we get 
$$\phi(R_1^{(n+1)})=\phi(R_2^{(n)})\phi(R_{1}^{(n)})=\phi(R_1^{(n-1)})\phi(R_{1}^{(n)})=\widetilde{\om_{n-1}}\widetilde{\om_{n}}=\widetilde{\om_{n+1}}.$$\end{enumerate}

\item \begin{enumerate}
\item For $n=0$, we have $\phi^{-1}(0)=\phi^{-1}(\widetilde{\om_{0}})=R^{(0)}_{1}$. For $n=1$, $$\phi^{-1}(10)=\phi^{-1}(\widetilde{\om_{1}})=R^{(0)}_{2}R^{(0)}_{1}=R_{1}^{(1)}.$$  
\item Assume that, $\phi^{-1}\widetilde{(\om_{n})}=R_1^{(n)},$ for all $0\leq k\leq n$, then $$\phi^{-1}(\widetilde{\om_{n+1}})= \phi^{-1}(\widetilde{\om_{n-1}}\widetilde{\om_{n}})=\phi^{-1}(\widetilde{\om_{n-1}})\phi^{-1}(\widetilde{\om_{n}})=R_{1}^{(n-1)}R_{1}^{(n)}$$ From \eqref{eq-corF1}, we have $$\phi^{-1}(\widetilde{\om_{n+1}})=R_{2}^{(n)}R_{1}^{(n)}=R_{1}^{(n+1)}.$$
\end{enumerate}
This completes the proof.
\end{enumerate}

\end{proof}

\begin{lemma}
\label{lem-PBS}
Let $\xi_n$ be a palindrome prefix of $\om_{n}$ such that $\om_n=\xi_nxy$, where $xy=10$ if $n$ is odd and $xy=01$ if $n$ is even. Then $$\xi_n=\widetilde{\om_0}\widetilde{\om_1}\cdots \widetilde{\om_{n-2}},$$ for all $n\geq 2$.
\end{lemma}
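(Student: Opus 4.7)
The plan is to argue by induction on $n \geq 2$. The base case is immediate: $\om_2 = 010$, so the palindromic prefix of length $F_2-2 = 1$ is $\xi_2 = 0 = \widetilde{\om_0}$, and indeed $\om_2 = \xi_2 \cdot 10$.

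For the inductive step, I would first record the following classical auxiliary fact about Fibonacci words: for every $n \geq 2$, the two concatenations $\om_n \om_{n-1}$ and $\om_{n-1} \om_n$ have the same prefix of length $F_{n+1}-2$ (and differ only in their last two letters, which are swapped). This is proved by a short separate induction: writing
\begin{equation*}
\om_n \om_{n+1} \;=\; \om_n \cdot (\om_n \om_{n-1}), \qquad \om_{n+1} \om_n \;=\; \om_n \cdot (\om_{n-1} \om_n),
\end{equation*}
both start with $\om_n$, and by the inductive hypothesis the two tails agree on the first $F_{n+1}-2$ letters, giving a common prefix of total length $F_n + F_{n+1} - 2 = F_{n+2}-2$ at the next stage.

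Granted this fact, I apply it with $\om_{n+1} = \om_n\om_{n-1}$: the prefix of length $F_{n+1}-2$ of $\om_{n+1}$ coincides with the prefix of length $F_{n+1}-2$ of $\om_{n-1}\om_n$, which is visibly $\om_{n-1}\xi_n$ (using $|\om_{n-1}|=F_{n-1}$ and $F_{n+1}-2-F_{n-1}=F_n-2=|\xi_n|$). Hence $\xi_{n+1} = \om_{n-1}\xi_n$. Because $\xi_{n+1}$ is palindromic by hypothesis and $\xi_n$ is palindromic by induction,
\begin{equation*}
\xi_{n+1} \;=\; \widetilde{\xi_{n+1}} \;=\; \widetilde{\om_{n-1}\xi_n} \;=\; \widetilde{\xi_n}\,\widetilde{\om_{n-1}} \;=\; \xi_n\,\widetilde{\om_{n-1}},
\end{equation*}
and combining this with the induction hypothesis $\xi_n = \widetilde{\om_0}\widetilde{\om_1}\cdots\widetilde{\om_{n-2}}$ yields $\xi_{n+1} = \widetilde{\om_0}\cdots\widetilde{\om_{n-1}}$, closing the induction.

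The only genuinely content-bearing step is the common-prefix lemma, so that is where I expect to spend the most care; the rest is length bookkeeping via $F_{n+1}=F_n+F_{n-1}$ and $|\widetilde{\om_k}|=F_k$, together with the palindrome symmetry. The parity condition on the trailing pair $xy$ then follows by tracking which of $01,10$ is traded at each application of the common-prefix fact, since the last two letters of $\om_n\om_{n-1}$ and $\om_{n-1}\om_n$ are swapped.
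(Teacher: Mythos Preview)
Your proof is correct and reaches the same key identity as the paper, namely $\xi_{n+1}=\xi_n\,\widetilde{\om_{n-1}}$, but you arrive there by a different route. The paper simply expands $\om_{n+1}=\om_n\om_{n-1}=(\xi_n xy)(\xi_{n-1}yx)$ directly, reads off $\xi_{n+1}=\xi_n\,xy\,\xi_{n-1}$, and then observes that $xy\,\xi_{n-1}=\widetilde{\xi_{n-1}yx}=\widetilde{\om_{n-1}}$ since $\xi_{n-1}$ is a palindrome. You instead first establish the classical ``near-commutation'' fact that $\om_n\om_{n-1}$ and $\om_{n-1}\om_n$ share a prefix of length $F_{n+1}-2$, deduce $\xi_{n+1}=\om_{n-1}\xi_n$, and then apply palindromicity of $\xi_{n+1}$ and $\xi_n$ to flip this to $\xi_n\,\widetilde{\om_{n-1}}$. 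Your path is a bit longer (it requires the auxiliary lemma and invokes the palindrome hypothesis on $\xi_{n+1}$ itself, which the paper's argument does not), but the near-commutation lemma is a standard and independently useful fact, so nothing is lost. The paper's argument is the more economical one here: it needs only the recursion $\om_{n+1}=\om_n\om_{n-1}$ and the palindromicity of $\xi_{n-1}$, with no side lemma.
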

\begin{proof}
We will proof by induction. First for $\xi_2=0=\widetilde{\om_0}$, and $\xi_3=010=\widetilde{\om_0}\widetilde{\om_1}$. 

Assume that $$\xi_k=\widetilde{\om_0}\widetilde{\om_1}\cdots \widetilde{\om_{k-2}},$$ for all value $k$ such that $0\leq k\leq n$. 
Let us set $\om_n=\xi_nxy$. Then $\om_{n-1}=\xi_{n-1}yx$ holds. 
Since $\om_{n+1}=\om_{n}\om_{n-1}$,  we get $\om_{n+1}=\xi_nxy \xi_{n-1}yx$. 

Hence, $$\xi_{n+1}=\xi_{n}xy \xi_{n-1}=\widetilde{\om_0}\widetilde{\om_1}\cdots \widetilde{\om_{n-2}}xy\xi_{n-1}.$$ 
Now, $\xi_{n-1}$ is a palindrome prefix, which yields $\widetilde{\om_{n-1}}=xy\xi_{n-1}$. This completes the proof.

\end{proof}

\begin{lemma}
\label{cor-code Bis}
\begin{enumerate}
\item Let $V=R_1^{(0)}R_1^{(1)}R_1^{(2)}R_1^{(3)}R_1^{(4)}\cdots R_1^{(k-2)}$, then $\phi(V)$ is the palindrome prefix of $\om_{k}$.
\item Let $v$ is a palindrome prefix of $\om_{k}$, then $\phi^{-1}(v)=R_1^{(0)}R_1^{(1)}R_1^{(2)}R_1^{(3)}R_1^{(4)}\cdots R_1^{(k-2)}$.
\end{enumerate}
 \end{lemma}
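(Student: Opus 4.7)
The statement of Lemma \ref{cor-code Bis} is essentially the composition of the two preceding lemmas (\ref{prop-decomp} and \ref{lem-PBS}), so the plan is short and consists mostly in assembling the pieces.

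For part (1), the plan is to exploit the fact that the block code $\phi$ from \eqref{eq-phi} is a morphism with respect to concatenation: applied to a product $R_{i_1}R_{i_2}\cdots R_{i_n}$ it just concatenates the images of each factor. Therefore, writing
\[
\phi(V)=\phi\bigl(R_1^{(0)}R_1^{(1)}R_1^{(2)}\cdots R_1^{(k-2)}\bigr)=\phi(R_1^{(0)})\phi(R_1^{(1)})\phi(R_1^{(2)})\cdots \phi(R_1^{(k-2)}),
\]
and invoking Lemma \ref{prop-decomp}(1) for each factor, I obtain
\[
\phi(V)=\widetilde{\om_0}\,\widetilde{\om_1}\,\widetilde{\om_2}\cdots\widetilde{\om_{k-2}}.
\]
By Lemma \ref{lem-PBS}, the right-hand side is exactly the palindrome prefix $\xi_k$ of $\om_k$, which is what part (1) asserts.

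For part (2), I would proceed symmetrically. Let $v$ be a palindrome prefix of $\om_k$. Lemma \ref{lem-PBS} identifies this palindrome prefix uniquely as $v=\widetilde{\om_0}\,\widetilde{\om_1}\cdots\widetilde{\om_{k-2}}$ (once the convention $\om_k=\xi_k xy$ is fixed). Since $\phi^{-1}$ is also a morphism from $\{0,1\}^*$ to the monoid of words in $\{R_1,R_2\}$, I can split
\[
\phi^{-1}(v)=\phi^{-1}(\widetilde{\om_0})\,\phi^{-1}(\widetilde{\om_1})\cdots \phi^{-1}(\widetilde{\om_{k-2}}),
\]
and apply Lemma \ref{prop-decomp}(2) to each block to conclude $\phi^{-1}(v)=R_1^{(0)}R_1^{(1)}\cdots R_1^{(k-2)}$.

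There is no serious obstacle here; the only point that deserves a line of justification is that the $\phi$--images of the factor loops $R_j^{(i)}$ concatenate correctly, i.e.\ that $\phi$ (and $\phi^{-1}$) are well defined as morphisms on the relevant free monoid. This is immediate from the definition \eqref{eq-phi}, because by construction a word built by concatenating factor loops of $W$ is unambiguously parsed as a sequence of $R_1$'s and $R_2$'s (and conversely every word in $\{0,1\}^*$ corresponds to a unique concatenation of $R_1$'s and $R_2$'s). Once this is noted, both items follow at once from Lemmas \ref{prop-decomp} and \ref{lem-PBS}.
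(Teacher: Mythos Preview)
Your proof is correct and follows exactly the same approach as the paper, which simply states that the lemma is an immediate consequence of Lemmas \ref{prop-decomp} and \ref{lem-PBS}. Your write-up makes the assembly explicit, but there is no substantive difference.
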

\begin{proof}
Its proof is an immediate consequence of lemma \ref{prop-decomp} and lemma \ref{lem-PBS}.
\end{proof}

{\bf Proof of Proposition \ref{main-sec 2}.}
We define

\begin{equation*}
\label{eq-decompose}
\Omega_{_W}=\prod\limits_{n\geq 0}R_1^{(n)}=R_1^{(0)}R_1^{(1)}R_1^{(2)}\cdots R_1^{(n)}\cdots,  
\end{equation*}
is the concantenation of sequence of words $\{R_1^{(n)}\}$.

From the factorization, $\om=\prod\limits_{n\geq 0}\widetilde{\om_n}$ (see \cite[proposition 17]{fici2015factorizations}), and lemma \ref{prop-decomp}, we get the following,

\begin{equation}
\label{eq-Omega}
\begin{cases}
 \phi(\Omega_{_W})=\om,\\
 \phi^{-1}(\om)=\Omega_{_W}.
\end{cases}\end{equation}

Now for each $n\geq 1$, define the following set,
$$L_{n}(\Omega_{_W})=\bigl\{R_{i_1}R_{i_2}R_{i_3} \cdots R_{i_n}: WR_{i_1}R_{i_2}R_{i_3}\cdots R_{i_n}\in L(\om), i_j=1,2\,\,\forall \,1\leq j\leq n\bigr\},$$ 
then from \eqref{eq-Omega}, we have the following
\begin{equation*}
\label{eq-omega}
\begin{cases}
 L_n(\om)=\phi(L_{n}(\Omega_{_W})),\\
 \phi^{-1}(L_n(\om))=L_{n}(\Omega_{_W}).
 
\end{cases}\end{equation*}
Note that, $L_{n}(W)=L_{n}(\Omega_{_W})$ (see, end of section \ref{sec-2.1}). Later, throughout the article, we will use notation $L_{n}(\Omega_{_W})$.

 \begin{remark}
\label{rem-1}
For every $n\geq1$, the set $L_{n}(\Omega_{_W})$ may have elements with unequal length. Indeed, if $U, V\in L_{n}(\Omega_{_W})$ with $|\phi(U)|_{0}\footnote{$|\phi(U)|_{c}$ is the number of occurrences of a binary digit $c$ in $\phi(U)$.} =|\phi(V)|_{0}$, and $|\phi(U)|_{1} =|\phi(V)|_{1}$ then $|U|=|V|$, otherwise $|U|\not=|V|$. More precisely, let $n=3$  $$L_{3}(\Omega_{_W})=\bigl\{R_{1}R_{2}R_{1}, R_{1}R_{1}R_{2}, R_{2}R_{1}R_{1}, R_{2}R_{1}R_{2}\bigr\},$$ then
\begin{itemize}
\item[(i)] $\phi(R_{1}R_{2}R_{1})=010$,
\item[(ii)] $\phi(R_{1}R_{1}R_{2})=001$,
\item[(iii)]  $\phi(R_{2}R_{1}R_{1})=100$ ,
\item[(iv)] $\phi(R_{2}R_{1}R_{2})=101$.
\end{itemize}

The words 010, 001 and 100 have an equal number of $0$ and $1$. Therefore, the three words $R_{1}R_{2}R_{1}, R_{1}R_{1}R_{2},$ and $R_{2}R_{1}R_{1}$ in $L_{3}(\Omega_{_W})$ have a length equal to $F_{p+1}$, and the word $R_{2}R_{1}R_{2}$ has length $F_{p}+F_{p-2}$.
\end{remark}

 
  
  

\subsection{Bicephalic Words.}
\label{sec-2.3}
The aim of this section is to define bicephalic words, and to give some of their basic properties. The main result of this section is proposition \ref{prop-CB}.

\begin{definition}
\label{def-bicephalic}
A finite word $U$ in $L(\om)$ is called a bicephalic word for a pair of the special word $(W,W^{\p})$, if $U=WU^{\p}W^{\p}$, for some $U^{\p}\in L(\om)$. A bicephalic word $U$ is called an identical bicephalic if $W=W^{\p}$.
\end{definition}


\begin{lemma}
\label{lem-MB}
The mirror of a bicephalic word is bicephalic.
\end{lemma}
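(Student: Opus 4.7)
The plan is to unfold the definition of bicephalic and then use the palindromic structure of the Fibonacci word. Write $U=WU'W'$ with $W,W'$ special and $U'\in L(\om)$; then taking the mirror
\[
\widetilde{U}=\widetilde{WU'W'}=\widetilde{W'}\,\widetilde{U'}\,\widetilde{W}.
\]
Two things then must be verified to conclude that $\widetilde{U}$ is bicephalic: (i) $\widetilde{U}\in L(\om)$, and (ii) $\widetilde{W}$ and $\widetilde{W'}$ are special words in $L(\om)$.

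For (i) I would invoke the reversal-invariance of the Fibonacci language, which is a standard Sturmian property. A self-contained argument available inside the present paper is to use Corollary \ref{cor-PS}: it produces arbitrarily long palindrome prefixes $W^{(n)}$ of $\om$, and every factor of $\om$ lies inside some $W^{(n)}$. Since $W^{(n)}=\widetilde{W^{(n)}}$, the mirror of that factor also lies inside $W^{(n)}$, hence belongs to $L(\om)$. Applied to $U'$ and to $U$ this gives $\widetilde{U'},\widetilde{U}\in L(\om)$.

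For (ii) I would use the elementary symmetry between right- and left-special words: if $a,b$ are two distinct letters with $Wa,Wb\in L(\om)$, then by (i) we have $a\widetilde{W},b\widetilde{W}\in L(\om)$, so $\widetilde{W}$ is left-special; symmetrically, mirrors of left-special words are right-special, and mirrors of bispecial words are bispecial. In particular $\widetilde{W}$ and $\widetilde{W'}$ are again special words of $\om$, so $\widetilde{U}=\widetilde{W'}\,\widetilde{U'}\,\widetilde{W}$ is bicephalic for the pair $(\widetilde{W'},\widetilde{W})$. When $W$ and $W'$ happen to be bispecial, Corollary \ref{cor-PS} also gives $\widetilde{W}=W$ and $\widetilde{W'}=W'$, so that $\widetilde{U}$ is in fact bicephalic for $(W',W)$, which is the form most likely needed in the sequel.

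The only delicate input is the reversal-invariance of $L(\om)$; once that is in hand the rest of the proof is a one-line computation. I would therefore organise the write-up as: a short lemma (or remark) recording that $L(\om)$ is mirror-closed via the palindrome prefixes $W^{(n)}$, followed by the three-line symbolic manipulation above.
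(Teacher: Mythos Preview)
Your proof is correct and follows the same route as the paper's: reverse the decomposition $U=WU'W'$ to get $\widetilde{U}=\widetilde{W'}\,\widetilde{U'}\,\widetilde{W}$ and invoke that the mirror of a special word is special. The paper simply cites \cite[section 4.10.3]{berthe2010combinatorics} for this last fact, whereas you supply a self-contained argument via the palindrome prefixes of $\om$ and also make explicit the mirror-closure of $L(\om)$, which the paper's one-line proof leaves tacit.
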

\begin{proof}
Mirror of a special word is itself special (see \cite[section 4.10.3]{berthe2010combinatorics}), therefore, $\widetilde{U}=\widetilde{W^{\p}}\widetilde{U^{\p}}\widetilde{W}$ is bicephalic for pair $(\widetilde{W^{\p}}, \widetilde{W})$.
\end{proof}

Later, we are more interested in identical bicephalic for a bispecial word.

We recall that $W$ is a bispecial word with length $F_p-2$, and $R_{1}$, $R_{2}$ are the corresponding factor loops of $W$ with length $F_{p-1}$ and $F_{p-2}$ (respectively). For every $n\geq 1$, we denote by $U_{_W}(n)$ an identical bicephalic for $W$, and $n$ is the number of occurrence of $W$ in $U_{_W}(n)$. Note that there are several such words. One of the purpose here is to make precise the number of possibilities. 

\begin{lemma}
\label{prop-decomp-bicep}
A bicephalic word $U_{_W}(n)$ can be factorized as;
$$U_{_W}(n)=WV,$$ for some $V\in L_{n-1}(\Omega_{_{W}})$.
\end{lemma}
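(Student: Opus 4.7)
The plan is to interpret the bicephalic word $U_{_W}(n)$ as the label of a closed walk in the Rauzy graph $\G_{|W|}(\om)$ based at the vertex $W$, and then read off the factor loop decomposition directly from that walk. First, since $U_{_W}(n)\in L(\om)$ and $W$ is simultaneously a prefix and a suffix of $U_{_W}(n)$, the sequence of length-$|W|$ windows sliding along $U_{_W}(n)$ defines a walk in $\G_{|W|}(\om)$ which starts at the vertex $W$ and ends at the vertex $W$.

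Second, because $W$ is bispecial we have $A_{|W|}=B_{|W|}=W$; the third path in (\ref{eq-shape2-2}) then reduces to $\epsilon$, and $\G_{|W|}(\om)$ consists exactly of the two disjoint loops at $W$ whose label sequences are, by Definition \ref{def:FL}, the factor loops $R_1$ and $R_2$. In particular, every closed walk at $W$ in this graph factorises uniquely as a concatenation of traversals of $R_1$ and/or $R_2$.

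Third, by definition $W$ occurs as a factor of $U_{_W}(n)$ exactly $n$ times, and these $n$ factor occurrences are in bijection with the $n$ visits of the walk to the vertex $W$ (an occurrence of $W$ at position $i$ in $U_{_W}(n)$ is the same data as the walk being at vertex $W$ after $i$ edges). Between the $j$-th and $(j+1)$-th such visit the walk traverses exactly one of the two loops, appending a label $R_{i_j}\in\{R_1,R_2\}$ to the portion of the word already read. Concatenating these $n-1$ loop labels gives
$$U_{_W}(n)=W\,R_{i_1}R_{i_2}\cdots R_{i_{n-1}},$$
so with $V:=R_{i_1}R_{i_2}\cdots R_{i_{n-1}}$ the identity $WV=U_{_W}(n)\in L(\om)$ is precisely the membership condition for $L_{n-1}(\Omega_{_W})$ recalled at the end of Section \ref{sec-2.2}. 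Hence $V\in L_{n-1}(\Omega_{_W})$, as required.

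The main subtlety, beyond invoking the structural description of $\G_{|W|}(\om)$ when $A_n=B_n$, lies in the bookkeeping of Step 3: for $p\ge 3$ one has $|R_2|=F_{p-2}<F_p-2=|W|$, so consecutive occurrences of $W$ as factors of $U_{_W}(n)$ actually overlap on the level of the word, and one has to make sure that this does not alter the count of loop traversals. The walk viewpoint handles this cleanly, since each visit to the vertex $W$ corresponds to exactly one factor occurrence of $W$ and vice versa; the count of $n-1$ loops between $n$ visits is therefore unambiguous, regardless of how much consecutive copies of $W$ overlap inside $U_{_W}(n)$.
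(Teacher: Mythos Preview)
Your proof is correct and follows essentially the same approach as the paper's own proof, which simply declares the lemma to be an immediate consequence of the structure of the Rauzy graph $\G_{|W|}(\om)$ (the two-loop configuration at the bispecial vertex, as in \cite[Section~1, Case~II]{arnoux1991representation}) together with Proposition~\ref{main-sec 2}. You have spelled out the Rauzy-graph walk argument in detail and explicitly handled the bookkeeping subtlety about overlapping occurrences of $W$; the paper leaves all of this implicit in a single sentence.
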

\begin{proof}
It is an immediate consequence from the formation of Rauzy graph $\G_{|W|}(\om)$ (see, \cite[Section 1, Case II]{arnoux1991representation}), and proposition \ref{main-sec 2}.
\end{proof}

From proposition \ref{main-sec 2}, we conclude that, for every $V\in L_{n-1}(\Omega_{_{W}})$, the word $WV\in L(\om)$.

For each $n\geq 1$, let us denote $\CB_{_W}(n)=\{ WV: V \in L_{n-1}(\Omega_{_{W}})$, is the set of all possible $U_{_W}(n)$. The following proposition is crucial to our main computation.

 \begin{proposition}
 \label{prop-CB}
 For each $n\geq 1$, the set $\CB_{_W}(n)$ has the following properties:
 \begin{itemize}
\item[(i)] $card\bigr(\CB_{_W}(n)\bigl)= n$,
\item[(ii)] If $n-1=F_m-2$ for some $m\geq2$, then $\CB_{_W}(n)$ contains exactly one special word, \ie a bispecial word,
\item[(iii)] If $n-1\not=F_m-2$  for all $m\geq2$, then $\CB_{_W}(n)$ contains exactly two special words.
\end{itemize}
\begin{proof}
\begin{itemize}
\item[(i)] From lemma \ref{prop-decomp-bicep}, and proposition \ref{main-sec 2}, we conclude that $$card\bigr(\CB_{_W}(n)\bigl)=card\bigr(L_{n-1}(\Omega_{_{W}})\bigl)=n.$$

\item[(ii)] If $n-1=F_m-2$, then from proposition \ref{main-sec 2}, there is exactly one word $V\in L_{n-1}(\Omega_{_W})$ such that $\phi(V)$ is a palindrome prefix of $\om$. From lemma \ref{cor-code Bis}, there exists a $k\geq 1$ such that $V=R^{(0)}_{1}R^{(1)}_{1}\cdots R^{(k-1)}_{1}$. Consequently, from lemma \ref{cor-F-1}, the word $WV$ is a bispecial word.
\item[(iii)] If  $n-1\not=F_m-2$, then from proposition \ref{main-sec 2}, there is exactly one word $U\in L_{n-1}(\Omega_{_W})$ such that $\phi(U)$ is a prefix of $\om$. Since prefixes of $\om$ are left special words (see  \cite[proposition 4.10.3]{berthe2010combinatorics}). Moreover, $U\not= R^{(0)}_{1}R^{(1)}_{1}\cdots R^{(k-1)}_{1}$, for all $k\geq 1$, otherwise $\phi(U)$ is a palindrome prefix (see lemma \ref{cor-code Bis}), and hence a bispecial word. This implies, $|\phi(U)|=n-1=F_m-2$, for some $m\geq 2$ (see proposition \ref{main-sec 2}), that is a contradiction. Therefore, $\phi(U)$ is a purely left special word(not a bispecial word). Then, equalities \eqref{eq-Omega}, imply that $U$ is a prefix of $\Omega_W$, and $U\not= R^{(0)}_{1}R^{(1)}_{1}\cdots R^{(k-1)}_{1}$, for all $k\geq 1$, therefore there exists a $m\geq 1$ such that $U$ is a prefix of $R^{(0)}_{1}R^{(1)}_{1}\cdots R^{(m-1)}_{1}$. The word $WR^{(0)}_{1}R^{(1)}_{1}\cdots R^{(m-1)}_{1}$ is a bispecial word (see lemma \ref{cor-F-1}), and has prefix $WU$. Since prefixes of a bispecial word are left special words (see corollary \ref{cor-sp-fac}), $WU$ is a left special word. The word $WU$ is a purely left special word, otherwise from lemma \ref{cor-F-1}, $U=R^{(0)}_{1}R^{(1)}_{1}\cdots R^{(l-1)}_{1}$ for some $l\leq m$, and that gives a contradiction. \\ From lemma \ref{lem-MB}, the word $\widetilde{WU}$ is an identical bicephalic. Moreover, the mirror of a purely left special word is a purely right special word(see  \cite[proposition 4.10.3]{berthe2010combinatorics}). This completes the proof.
\end{itemize}
\end{proof}
 \end{proposition}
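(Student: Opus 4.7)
The plan is to reduce all three statements to combinatorial properties of $L_{n-1}(\om)$ via the coding $\phi$. By Lemma \ref{prop-decomp-bicep}, the map $V\mapsto WV$ is a bijection from $L_{n-1}(\Omega_{_W})$ onto $\CB_{_W}(n)$, and by Proposition \ref{main-sec 2}, $\phi$ is a bijection $L_{n-1}(\Omega_{_W})\to L_{n-1}(\om)$. Since $\om$ is Sturmian, its complexity satisfies $|L_k(\om)|=k+1$, which immediately gives $|\CB_{_W}(n)|=n$, proving (i).

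For (ii) and (iii), I would use the standard description of special factors in the Fibonacci word: for each length $\ell$ there is exactly one left-special factor, namely the prefix of $\om$ of length $\ell$, and exactly one right-special factor, namely its mirror; the two coincide (producing a bispecial factor) precisely when this prefix is a palindrome, which by Lemma \ref{lem-PBS} happens iff $\ell=F_m-2$ for some $m\geq 2$. The key step is to transport this picture to $\CB_{_W}(n)$. Since $\phi(\Omega_{_W})=\om$ by \eqref{eq-Omega} and $W$ is itself a palindrome prefix of $\om$, an element $WV\in \CB_{_W}(n)$ is a prefix of $\om$ iff $\phi(V)$ is a prefix of $\om$, and there is exactly one such $V\in L_{n-1}(\Omega_{_W})$. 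By Lemma \ref{lem-MB} together with $W=\wt W$, the mirror $\wt{WV}$ also lies in $\CB_{_W}(n)$, and mirroring swaps left- and right-special; hence exactly one element of $\CB_{_W}(n)$ is right-special.

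The case split then finishes the argument. In case (ii), when $n-1=F_m-2$, Lemma \ref{cor-code Bis} combined with Lemma \ref{cor-F-1} identifies the unique $V=R_1^{(0)}R_1^{(1)}\cdots R_1^{(m-2)}$ that makes $WV$ itself a bispecial palindrome prefix of $\om$; the left- and right-special elements of $\CB_{_W}(n)$ collapse into this single bispecial word. In case (iii), the prefix of $\om$ of length $n-1$ is not a palindrome, so $WV\neq \wt{WV}$ and $\CB_{_W}(n)$ contains exactly two distinct special words, one left-special and one right-special, neither bispecial. The main obstacle I expect is justifying the equivalence ``$WV$ is left-special in $L(\om)$ $\Leftrightarrow$ $WV$ is a prefix of $\om$'' and its right-handed analogue; Corollary \ref{cor-sp-fac} is the tool I would lean on, since it shows that any sufficiently long purely left-special word is a prefix of some bispecial word of the form $WR_1^{(0)}\cdots R_1^{(k)}$, hence a prefix of $\om$, which via $\phi$ pins down the unique $V$ needed for the counting.
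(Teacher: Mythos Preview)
Your proposal is correct and follows essentially the same route as the paper: both arguments use the bijection $V\mapsto WV$ from $L_{n-1}(\Omega_{_W})$ onto $\CB_{_W}(n)$ together with Proposition~\ref{main-sec 2} to reduce (i) to Sturmian complexity, and for (ii)--(iii) both identify the special elements of $\CB_{_W}(n)$ as $WU$ (with $\phi(U)$ the unique prefix of $\om$ of length $n-1$) and its mirror, invoking Lemmas~\ref{cor-F-1}, \ref{cor-code Bis}, \ref{lem-MB} and Corollary~\ref{cor-sp-fac} in the same way. If anything, your write-up is slightly more explicit than the paper about the one delicate point---that \emph{every} left-special $WV\in\CB_{_W}(n)$ must arise this way---and you correctly flag Corollary~\ref{cor-sp-fac} as the tool that closes that direction.
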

 


\section{Proof of Theorem \ref{main-th}.}
\label{sec-3}
In this section, we will prove theorem \ref{main-th}. We reemploy some vocabulary and tools given in \cite{Bruin_2015}.
\subsection{$FE$ and $EF$-Transition.}
\label{sec-3.1}

From section \ref{sec-main setting}, we recall that $J=[000]$ and $\CR_J$ is the set of all return words to the cylinder $J$. Let $y\in J$, then there is a $u\in \CR_J$ with length $n$ such that $y=ux$, for some $x\in J$. We denote by
$\CO(y)=\{\s^{i}(y): n\in \N\}$ is the orbit of $y$. The Birkhoff sum for $y$ in \eqref{eq-lambda} is as follows: $$\CS_{|u|}\varphi(y)=\sum_{i=0}^{|u|-1}\varphi(\s^{i}(y)),$$ that depends on the subset $\CO_{u}(y)=\{\s^{i}(y): 0\leq i \leq |u|-1\}$, of the orbit set $\CO(y)$. Therefore, we will focus on the elements of set $\CO_{u}(y)$.\\

Concerning $N$(see section \ref{sec-main setting}), let us divide the full shift $\S$ into the two zones. The first zone is the free zone $\S_{\CF}:=\{x\in \S :\de(x)<N\}$, and the second zone is the excursion zone $\S_{\CE}:=\{x\in \S :\de(x)\geq N\}.$ 
Note that the cylinder $J$ is a subset of the free zone and the subshift $\K$ is a subset of the excursion zone. 
 
 Let us fixed a return word $u$, and $|u|=n$, $y=ux$.

\begin{definition}
\label{def-FE time}
Let $k\in \llb 0,n-1\rrb$. 
\begin{itemize}
\item[$($i$)$] If $\de(\s^{k}(y))<N$, we say that $k$  is a free time of $u$. 
\item[$($ii$)$] Otherwise, we say that $k$ is an excursion time of $u$. 
\end{itemize}
\end{definition}


\begin{definition} 
\label{def-accident}
An integer $k\in \llb 1,n-1\rrb $ is called an accident time in the orbit $\CO(y)$ if, and only if,  $$\de(\s^{k}(y))>\de(\s^{k-1}(y))-1,$$ holds. 
\end{definition}

We set $\de(\sigma^{k}(y))=d=|v|$, $d$ is called the depth of the accident appears at time $k$ in orbit $\CO(y)$ (see \cite{bedaride2015thermodynamic}).\\

Note that, if $k\in \llb 1,n-1\rrb $ is not an accident time, then 
\begin{equation}
\label{eq-no-acc}
\de(\sigma^{k}(y))=\de(\sigma^{k-1}(y))-1.
\end{equation} 
For more detail of the accident, we refer \cite{bedaride2015thermodynamic} and \cite{Bruin_2015}.\\

\begin{proposition}[see \cite{Bruin_2015}]
\label{prop-accident bispecial}
Let $k\in \llb 1,n-1\rrb$ be an accident time and $\delta(\sigma^{k-1}(y))=p$. Then the factor of $u$ appearing at the position $k$ with length $p-1$ is a bispecial word. 
\end{proposition}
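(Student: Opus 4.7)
The plan is to analyze directly the factor $V$ of length $p-1$ occurring at position $k$ and show that it is both right and left special in $L(\omega)$. The argument is purely combinatorial and does not require any specific Fibonacci structure beyond the fact that $L(\omega)$ is closed under taking factors and that a factor with a unique extension on one side determines that extension at every occurrence.

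First, I would unpack the combinatorial content of the accident. By the definition of $\delta$, the hypothesis $\delta(\sigma^{k-1}(y))=p$ says that $y_{k-1}y_{k}\cdots y_{k+p-2}\in L(\omega)$ but $y_{k-1}y_{k}\cdots y_{k+p-1}\notin L(\omega)$. The accident condition $\delta(\sigma^{k}(y))>p-1$, compared with the no-accident identity \eqref{eq-no-acc}, forces $\delta(\sigma^{k}(y))\geq p$, so $y_{k}y_{k+1}\cdots y_{k+p-1}\in L(\omega)$. Setting $V:=y_{k}y_{k+1}\cdots y_{k+p-2}$, a word of length $p-1$, we therefore have $V\in L(\omega)$, $y_{k-1}V\in L(\omega)$ and $Vy_{k+p-1}\in L(\omega)$, while the two-sided extension $y_{k-1}Vy_{k+p-1}$ is forbidden.

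Next, I would show $V$ is right special by contradiction. Assume $V$ has a unique right-extension letter in $\omega$. Since $Vy_{k+p-1}\in L(\omega)$, that letter would have to be $y_{k+p-1}$, so every occurrence of $V$ inside $\omega$ would be followed by $y_{k+p-1}$. Applying this to an occurrence of $V$ preceded by $y_{k-1}$ (which exists since $y_{k-1}V\in L(\omega)$) produces $y_{k-1}Vy_{k+p-1}\in L(\omega)$, contradicting the forbidden two-sided extension. Hence $V$ admits two distinct right extensions. A completely symmetric argument on the left — a hypothetical unique left-extension would be forced to be $y_{k-1}$, and applying it to the occurrence of $V$ that is followed by $y_{k+p-1}$ would again yield $y_{k-1}Vy_{k+p-1}\in L(\omega)$ — shows $V$ is also left special, hence bispecial.

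The only delicate point is bookkeeping the indices: matching the shift $\sigma^{k-1}$ with the letter $y_{k-1}$ sitting just to the left of the factor, and the accident at time $k$ with the letter $y_{k+p-1}$ sitting just to its right, so that the length of $V$ comes out to $p-1$ and the forbidden word has length $p+1$. Once the overlap picture $y_{k-1}Vy_{k+p-1}\notin L(\omega)$ with $y_{k-1}V,\ Vy_{k+p-1}\in L(\omega)$ is in place, both specialities of $V$ reduce to the same one-line unique-extension argument, and no further combinatorial machinery is needed.
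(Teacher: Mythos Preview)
Your argument is correct. The paper does not supply its own proof of this proposition; it simply attributes the result to \cite{Bruin_2015}. What you wrote is precisely the standard derivation: from $\delta(\sigma^{k-1}(y))=p$ and the accident inequality you extract the configuration $y_{k-1}V\in L(\omega)$, $Vy_{k+p-1}\in L(\omega)$, $y_{k-1}Vy_{k+p-1}\notin L(\omega)$ for $V=y_{k}\cdots y_{k+p-2}$, and the unique-extension contradiction on each side then forces $V$ to be bispecial. Your index bookkeeping is consistent with the paper's intended reading of $\delta$ as the length of the longest prefix lying in $L(\omega)$ (cf.\ the remark that $\delta(000)=2$ and the sentence defining $\delta(u)$ for finite words), so nothing further is needed.
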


As a consequence of definition \ref{def-accident}, elements of the set $\CO_{u}(y)$ roam in the free zone and the excursion zone. A significant transition in orbit appears at two times:

\begin{enumerate}
\item when $\s^{i}(ux)\in \S_{\CF}$, and $\s^{i+1}(ux)\in \S_{\CE}$,
\item when $\s^{j}(ux)\in \S_{\CE}$, and $\s^{j+1}(ux)\in \S_{\CF}$,
\end{enumerate}
for some $i,j\in \llb 1,n-1\rrb $.
 Indeed, our object in this section is to observe words in the language $L(\om)$; associated with returning $u$, when these two major transitions (mentioned above) appear in orbit.
\begin{definition}
\label{FE-transition}
An integer $k\in \llb 1,n-1\rrb $ is called FE-transition time if, and only if, $$\de(\s^{k}(ux))\geq N,\quad and \quad \de(\s^{k-1}(ux))\leq N-1,$$ \ie the orbit enters in excursion zone right after the free zone.
\end{definition}

The following proposition appears in \cite{Bruin_2015}, is about $FE$-transition.

\begin{proposition}
\label{prop-F to E}
Let $u$ be a return word of length $n$. Let $k\in \llb 1,n-1\rrb$ be a free time, such that $k+1$ is an excursion time for $ux$. Set $\de(\s^{k}(ux))=p\le N-1$.
Then, 
\begin{itemize}
\item[$($i$)$] $u_{k+1}\ldots u_{k+p-1}$ is a bispecial word with length $\leq N-2$.
\item[$($ii$)$] $u_{k}\ldots u_{k+p-1}$ is not a right special word. 
\end{itemize}

\end{proposition}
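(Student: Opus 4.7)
The plan is to deduce (i) directly from the accident proposition, and to prove (ii) by contradiction via a combinatorial analysis of the bispecial factor $V':=u_{k+1}\ldots u_{k+p-1}$. Writing $a:=u_k$, $b:=u_{k+p}$, $c:=u_{k+p+1}$, I will use that the depth conditions translate into
\[aV'b,\ V'bc\in L(\om)\quad\text{and}\quad aV'bc\notin L(\om).\]
For (i), since $\de(\s^{k+1}(ux))\ge N>p-1=\de(\s^k(ux))-1$, the index $k+1$ is an accident time, and Proposition \ref{prop-accident bispecial} gives that $V'$ is bispecial of length $p-1\le N-2$; in particular $p-1=F_r-2$ for some $r\ge 2$, so $p=F_r-1$.

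For (ii), I will argue by contradiction, assuming $u_k V'=aV'$ is right special. The first step will be to establish that $V'b$ must then be left special. Since $\om$ is uniformly recurrent, $V'bc\in L(\om)$ admits at least one left extension in $\om$; this extension cannot be $a$ (otherwise $aV'bc\in L$), so $\bar a V'bc\in L(\om)$ and in particular $\bar a V'b\in L(\om)$. Combined with $aV'b\in L(\om)$, both letters extend $V'b$ on the left, so $V'b$ is left special, and since $L_p(\om)$ contains a unique left-special factor (the length-$p$ prefix $B_p$ of $\om$), we get $V'b=B_p$.

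The final step will exploit the Fibonacci identity $F_r-1\ne F_q-2$ for all $r,q\ge 2$ (equivalently $F_r+1$ is never a Fibonacci number for $r\ge 2$), which forces $B_p$ to be left special but not bispecial; hence $B_p$ has a unique right extension $c^*\in\{0,1\}$ in $\om$, and every occurrence of $B_p$ is immediately followed by $c^*$. From $V'bc\in L(\om)$ I deduce $c=c^*$, and since $aV'b$ is a factor of $\om$ whose $V'b$-suffix is also followed by $c^*$ at each of its occurrences, I obtain $aV'bc^*=aV'bc\in L(\om)$, contradicting $aV'bc\notin L(\om)$. The main obstacle will be the LS step identifying $V'b=B_p$ together with the Fibonacci-specific non-collision $F_r+1\notin\{F_q\}$ for $r\ge 2$; the borderline case $p=1$ (where $V'$ is empty and $B_1=0$ is itself bispecial, so the unique-right-extension argument breaks down) needs a separate short argument using $u_{n-1}=1$ (from minimality of the return time to $[000]$) and the absence of inner $000$-factors in $u$ to exclude any $k\in\llb 1,n-1\rrb$ with $u_ku_{k+1}u_{k+2}=000$.
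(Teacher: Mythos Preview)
The paper does not supply its own proof of this proposition; it is quoted from \cite{Bruin_2015}, so there is no in-paper argument to compare your attempt against.

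That said, your argument for (ii) has a structural red flag you should not ignore: the contradiction hypothesis ``$aV'$ is right special'' is never actually used. Every step you give---$V'bc$ has a left extension which must be $\bar a$, hence $V'b$ is left special, hence $V'b=B_p$, hence (via $p=F_r-1\neq F_q-2$) $B_p$ has a unique right extension $c^*=c$, hence $aV'bc\in L(\om)$---runs solely on the three depth relations $aV'b\in L$, $V'bc\in L$, $aV'bc\notin L$ together with part (i). So for $p\ge 2$ you have derived a contradiction from the hypotheses of the proposition alone, i.e.\ you have ``proved'' that an FE-transition with $\de(\s^k(ux))\ge 2$ cannot occur. Indeed the same depth relations also force $V'b$ to be \emph{right} special (the factor $aV'b\in L$ must right-extend by some letter, necessarily $\bar c$, so $V'b\bar c\in L$ alongside $V'bc\in L$); thus $V'b$ is bispecial of length $p$, which combined with (i) ($V'$ bispecial of length $p-1$) forces $p\le 1$.

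This internal vacuity points to an off-by-one in the way (i) and Proposition~\ref{prop-accident bispecial} are transcribed in this paper. Redoing the accident computation directly from the definition of $\de$ shows that the bispecial factor at an accident at time $k+1$ with $\de(\s^{k})=p$ is $u_{k+1}\ldots u_{k+p}$ (length $p$), not $u_{k+1}\ldots u_{k+p-1}$ (length $p-1$). Under that corrected reading your Fibonacci non-collision $F_r+1\notin\{F_q\}$ no longer applies, and the clean route to (ii) becomes: the unique right-special word of length $p$ is $A_p=V'b$, so $aV'$ could be right special only if $aV'=V'b$, which forces $V'$ to be a power of a single letter and is impossible for a Fibonacci bispecial factor of length $\ge 2$; the short cases are then handled directly, much as in your borderline analysis.
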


Since $FE$-transition time is an accident time (see, definitions \ref{def-accident} and \ref{FE-transition}), and from propositions \ref{prop-accident bispecial}, \ref{prop-F to E}, a bispecial word with length $\leq N-2$ appears in $u$ at $FE$-transition time. Concerning these results, we have the following definition.

\begin{definition}
\label{def-entry}
 We denote by:$$\CI=\{w\in L(\om): w\,\,is\,\,\,bispecial\,\,word,\,\,and\,\,|w|\leq N-2\}.$$  An element $w\in\CI$ is called an \emph{Entry Word}. 
\end{definition}
Note that, any $FE$-transition time in orbit associated with some entry word. 

\begin{definition}
\label{EF-transition}
An integer $k\in \llb 1,n-1\rrb $ is called EF-transition time if, and only if, $$\de^{k-1}(ux)\geq N,\quad and \quad \de^{k}(ux)\leq N-1,$$ \ie the orbit enters in the free zone right after the excursion zone.
\end{definition}

\begin{proposition}
\label{prop-E to F}
Let $u$ be a return word of length $n$. Let $k\le n-1$ be an excursion time, such that $k+1$ is a free time. Set $\s^{k}(ux)=z=z_{0}z_{1}\ldots$, then the following hold:
\begin{itemize}
\item[$($i$)$] $\de(z)=N$, $\de(\s(z))=N-1$,
\item[$($ii$)$] $z_{1}\ldots z_{N-1}$ is not a right special word.
\end{itemize}

\end{proposition}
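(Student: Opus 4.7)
For part (i), I would use the elementary monotonicity of $\de$: for any $x\in\S$, the inequality $\de(\s x)\ge \de(x)-1$ is immediate from the definition, since shifting a prefix of $x$ of length $\de(x)+1$ in $L(\om)$ gives a prefix of $\s x$ of length $\de(x)$ in $L(\om)$. Equality fails only at an accident (in the sense of Definition \ref{def-accident}), where $\de(\s x)\ge \de(x)$. The hypotheses $z\in\S_{\CE}$ and $\s z\in\S_{\CF}$ give $\de(\s z)\le N-1<N\le \de(z)$, which rules out the accident case; hence $\de(\s z)=\de(z)-1$, and squeezing the inequalities forces $\de(z)=N$ and $\de(\s z)=N-1$.

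For part (ii), I would argue by contradiction. Part (i) already yields $z_0\ldots z_N,\,z_1\ldots z_N\in L(\om)$ together with $z_0\ldots z_{N+1},\,z_1\ldots z_{N+1}\notin L(\om)$. Suppose $z_1\ldots z_{N-1}$ were right-special; denoting the other right extension by $a\neq z_N$, we would have $z_1\ldots z_{N-1}a\in L(\om)$, and $z_1\ldots z_{N-1}$ would coincide with the unique right-special factor $A_{N-1}$ of length $N-1$. The restriction $N-1\neq F_m-2$ for all $m\geq 2$ ensures that no bispecial factor of length $N-1$ exists, so $A_{N-1}$ has a unique left extension, which must be $z_0$ (since $z_0A_{N-1}\in L(\om)$). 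Invoking the standard Rauzy-graph fact that the length-$(N-1)$ suffix of $A_N$ is right-special (hence equals $A_{N-1}$), one concludes $A_N=z_0A_{N-1}=z_0z_1\ldots z_{N-1}$.

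Right-specialness of $A_N$ then gives $z_0\ldots z_{N-1}a\in L(\om)$ alongside the known $z_0\ldots z_N\in L(\om)$. If $z_0\ldots z_{N-1}a$ were itself right-special and thus equal to $A_{N+1}$, its length-$N$ suffix would have to equal $A_N$, yielding $z_1\ldots z_{N-1}a=z_0\ldots z_{N-1}$ and therefore $z_0=z_1=\cdots=z_{N-1}=a$. Since the Fibonacci shift admits no constant factor of length three or more and $N\gg 2$, this is absurd. The remaining case, where $z_0\ldots z_{N-1}a$ is not right-special, would be handled by induction along the right-special chain: the letter $u_{k-1}$ preceding $z_0$ in $\s^{k-1}(ux)$ serves as a left extension of $A_N$ and, at non-bispecial lengths, must equal the unique such extension producing $A_{N+1}=u_{k-1}A_N$, so the same dichotomy can be reapplied at length $N+1$. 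The principal obstacle is controlling this ascent across the first bispecial length $F_p-2>N$, where the uniqueness of the left extension fails; combined with careful use of the excursion-time information at $k-1$, this step should produce the desired contradiction.
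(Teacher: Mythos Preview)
Your treatment of part~(i) is correct and essentially identical to the paper's: the inequality $\de(\s z)\ge\de(z)-1$ together with $\de(\s z)\le N-1<N\le\de(z)$ rules out an accident at $k+1$, forces $\de(\s z)=\de(z)-1$, and squeezing gives $\de(z)=N$, $\de(\s z)=N-1$.

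For part~(ii), however, you have missed the one-line argument and replaced it by an elaborate and, by your own admission, incomplete ascent along the chain of right-special factors. The source of the difficulty is an indexing mismatch. The paper's \emph{working} convention --- as witnessed by the stated value $\de(000)=2$ and by the later use of exit words of length $N-1$ --- is that $\de(x)$ equals the \emph{length} of the longest prefix of $x$ lying in $L(\om)$: thus $\de(x)=m$ means $x_0\ldots x_{m-1}\in L(\om)$ while $x_0\ldots x_m\notin L(\om)$. Under this convention, $\de(\s(z))=N-1$ says precisely that $z_1\ldots z_{N-1}\in L(\om)$ but $z_1\ldots z_{N-1}z_N\notin L(\om)$. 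Hence $z_N$ is \emph{not} a valid right extension of $z_1\ldots z_{N-1}$, so this word cannot be right special. That is the paper's entire proof of~(ii).

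You instead took the displayed formula for $\de$ at face value, obtaining $z_1\ldots z_N\in L(\om)$, which removes the direct contradiction and pushes you into the Rauzy-graph climb through $A_{N-1},A_N,A_{N+1},\ldots$. That climb is left unfinished at the first bispecial length exceeding $N$, where the unique-left-extension step breaks down, and nothing in your sketch explains how the excursion-time data at $k-1$ would bridge it. The remedy is not to complete the induction but to use the intended normalisation of $\de$; once you do, (ii) is immediate and none of the machinery involving $A_{N-1}$, the constraint $N-1\neq F_m-2$, or constant factors is needed.
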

\begin{proof}
\begin{description}
\item \textit{(i)}\,\, From definition \ref{def-FE time}, $\de(z)\ge N$, and $\de(\s(z))\le N-1$. Moreover,  $\de(\s(z))\not\ge \de(z)-1$. Therefore, $k+1$ is not an accident time (see definition \ref{def-accident}). From \eqref{eq-no-acc}, we have $\de(z)-1=\de(\s(z))$. Now if $\de(z)\geq N$, then $\de(\s(z))\geq N-1$, but $\de(\s(z))\leq N-1$. Therefore, $\de(\s(z))=N-1$, and $\de(z)=N$ (see \eqref{eq-no-acc}). This completes $(i)$.
\item\,\, \textit{(ii)} Since $\de(z)=|z_{0}\ldots z_{N-1}|$, and the word $z_{0}\ldots z_{N-1} \in L(\om)$. Equality $\de(\s(z))=N-1$ yields that $z_{1}\ldots z_{N-1}\in L(\om)$, and $z_{1}\ldots z_{N-1}z_{N}\not\in L(\om)$. Therefore, the word $z_{1}\ldots z_{N-1}\in L(\om)$ is not a right special word, otherwise, for any choice of $z_{N}\in\{0,1\}$, we always find $z_{1}\ldots z_{N-1}z_{N}$ belongs to $L(\om)$, and this yields to a contradiction that $\de(\s(z))>N-1$. 
\end{description}\end{proof}

We point out;  if $z=z_{0}\ldots z_{N}\ldots\in \S$ is such that $z_{0}\ldots z_{N-1}$ belongs to $L(\om)$, but $z_{1}\ldots z_{N}$ does not, then, $\de(z)=N$ and $\de(\s(z))=N-1$.\\

We assume that $v$ is a prefix of $\s(z)$ with $|v|=N-1$. From proposition \ref{prop-E to F}, $v$ is not the right special word. We observe that an $EF$-transition time is linked with a finite word with length $N-1$, which is not a right special word.

\begin{definition}
\label{def-exit}
We denote by: $$\CO=\{v\in L(\om): v\,\,is\,\,not\,\,a\,\,right\,\,special \,\,word,\,\,and\,\,|v|= N-1\}.$$  An element $v\in\CO$ is called an \emph{Exit Word}. 
\end{definition}

Since $|L_{N-1}(\om)|=N$, there are possibly $N-1$ exit word in the language $L(\om)$.


\subsection{Main computation.}
\label{sec-3.2}
We remind that $J=[000]$ and $\CR_J$ is the set of all return word to the cylinder $J$ (see section \ref{sec-main setting}). Let $W$ be a bispecial word with length $F_{p}-2 \geq N$, and let $R_1$ and $R_2$ are its corresponding factor loops with length $F_{p-1}$, $F_{p-2}$ respectively (see section \ref{sec-2}).

\medskip
We consider   the set $\CD_{W}$ of all trajectories satisfying the following conditions:
\begin{enumerate}
\item $y\in J$ and $y=ux$, for some $u\in \CR_{J}$, for some $x\in J$.
\item $\de(\s(y))\geq N$, \ie $1$ is the $FE$-transition time.
\item Only two accident appear in $\S_{\CE}$.
\item Assume that accidents in $\S_{\CE}$ appear at time $l,l^{\p}\in \llb 1,n-1\rrb $, $l\leq l^{\p} $, then $\de(\s^{l}(ux))\in [W]$, and $\de(\s^{l^{\p}}(ux))\in [W]$, \ie $W$ is the bispecial word appears at time $l,l^{\p}$ in the return word $u$(see proposition \ref{prop-accident bispecial}).
\item The exit word at $EF$-transition time is the purely left special word $v$ with length $N-1$\footnote{$N-1\not= F_s-2$, for some $s\geq 2$.}.
\item $l^{\p}$ is the last accident time in the set $\CO_{u}(y)$.
\end{enumerate}

Since there is only one bispecial word with length $F_p-2$, therefore for two bispecial words $W$ and $W^{\p}$, and $|W|\not=|W^{\p}|$, we always have $\CD_{W}\cap \CD_{W^{\p}}$ (see proposition \ref{prop-accident bispecial}).\\

From the above conditions (1)-(6), any $y\in \CD_{W}$ can be decomposed as follows:
\begin{equation*}
\label{eq-y-decomp}
y=000\cdots W\cdots W\cdots va x,
\end{equation*}
where $a\in\{0,1\}$ such that $va\not\in L(\om)$ (see definition \ref{def-exit}) and $x\in J$.

Furthermore, the corresponding return words of the trajectories in $\CD_{W}$ can be write as:

\begin{equation*}
\label{eq-r-decomp}
u000=000u^{\p}va000\footnote{If $a=0$, then $u000=000u^{\p}va00$.}
\end{equation*}

The word $u^{\p}$ is an inner factor of $u$, and the word $00u^{\p}va$ is the excursion part of the return word $u$. \\

Let $U=00u^{\p}vax$ for some $x\in J$. Then $$\de(\s^{i}(U))\geq N\,,\,\,\,\,\,\,\,\,\,\,\,\,\,\,\,\,\,\forall\,\,\,\,0\leq i \leq |u^{\p}|+1$$ and $$\de(\s^{ |u^{\p}|+2}(U))=N-1=|v|$$ This yield that  $|u^{\p}|+2$ is the $EF$-transition time.

Let us denote, $d_0,d_1$ and $d_2$ be the depts of the accidents appear at time $1, l$ and $l^{\p}$ respectively (see definition \ref{def-accident}). We take $U^{(0)}$, $U^{(1)}$ and $U^{(2)}$ are words in $L(\om)$ such as:

\begin{itemize}
\item[(i)] $\de\bigr(\s(Ux)\bigl)=d_0=|U^{(0)}|$
\item[(ii)] $\de\bigr(\s^{l}(Ux)\bigl)=d_1=|U^{(1)}|$
\item[(iii)] $\de(\s^{l^{\p}}\bigr(Ux)\bigl)=d_2=|U^{(2)}|$
\end{itemize}

The following lemma illustrate Figure \ref{Figure 1} (see \cite[Figure 2]{Bruin_2015}):

\begin{lemma}
\label{lem-dept-bicephalic}
The words $U^{(0)}$, $U^{(1)}$ and $U^{(2)}$ are non-special bicephalic words.
\end{lemma}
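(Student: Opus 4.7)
The plan is to pin down, for each $i\in\{0,1,2\}$, a special prefix and a special suffix of $U^{(i)}$ using Proposition \ref{prop-accident bispecial}, equation \eqref{eq-no-acc}, and the trajectory constraints (1)--(6) defining $\CD_W$, and then to rule out that $U^{(i)}$ is itself left or right special by exploiting the uniqueness of the special word of each length in $L(\om)$ (Section \ref{sec-2.1}) together with Corollary \ref{cor-sp-fac}.

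\textbf{Locating the two heads.} By Proposition \ref{prop-accident bispecial} applied at the accident time $l$, together with condition (4), the factor of $y$ at position $l$ of length $\de(\sigma^{l-1}(y))-1$ is precisely $W$; running \eqref{eq-no-acc} between times $1$ and $l$ gives $|W|=d_0-l+1$, so $W$ occupies the last $|W|$ letters of $U^{(0)}$ and the first $|W|$ letters of $U^{(1)}$. Repeating the argument at $l'$ and applying \eqref{eq-no-acc} between $l$ and $l'$ yields $d_1=(l'-l)+|W|$, placing the second copy of $W$ exactly at the end of $U^{(1)}$; hence $U^{(1)}$ is an identical bicephalic word for $W$. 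For $U^{(2)}$, the same reasoning at $l'$ shows it starts with $W$; since $l'$ is the last accident in $\S_\CE$, \eqref{eq-no-acc} down to the $EF$-transition time $k$ gives $d_2=(k-l')+(N-1)$, and Proposition \ref{prop-E to F} identifies the length-$(N-1)$ suffix of $U^{(2)}$ with the purely left special exit word $v$, so $U^{(2)}$ is bicephalic for $(W,v)$. Finally, the length-$1$ prefix $y_1=0$ is (trivially) bispecial, so $U^{(0)}$ is bicephalic for $(0,W)$.

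\textbf{Non-specialness.} For $U^{(0)}$, appending $y_0=0$ on the left produces the forbidden factor $000$ (because $y_0y_1y_2=000$), so only the alternate letter $1$ left-extends $U^{(0)}$ in $L(\om)$; hence $U^{(0)}$ is not left special. For $U^{(1)}$ and $U^{(2)}$, switching the admissible left extension would force an additional accident at time $l-1$ or $l'-1$ (by \eqref{eq-no-acc}), contradicting condition (3). Right non-specialness is handled via Corollary \ref{cor-sp-fac}: the unique right special word of length $|U^{(i)}|$ is a suffix of a bispecial word of the form $WR_1^{(0)}\cdots R_1^{(k)}$, whose structure forces any right special word of length strictly greater than $|W|$ to continue $W$ with a factor loop; this is incompatible with the identified suffixes $W$ for $U^{(0)},U^{(1)}$ and $v$ for $U^{(2)}$, given the explicit lengths $d_0,d_1,d_2$.

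\textbf{Main obstacle.} The delicate step is the right non-specialness of $U^{(2)}$, whose identified suffix is the purely left special word $v$ rather than the bispecial $W$; here I must combine the length $d_2=(k-l')+(N-1)$ with the suffix description from Corollary \ref{cor-sp-fac} and the standing hypothesis $N-1\neq F_n-2$ from Section \ref{sec-main setting} to rule out coincidence with the unique right special word of length $d_2$. The corresponding arguments for $U^{(0)}$ and $U^{(1)}$ are analogous but easier, since the bispecial suffix $W$ at their right ends provides a cleaner obstruction.
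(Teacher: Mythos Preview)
Your bicephalic decompositions agree with the paper's, which dispatches the entire lemma in a single sentence: ``By the formation of the trajectories in $\CD_W$, the words can be decomposed as $U^{(0)}=0X^{(0)}W$, $U^{(1)}=WX^{(1)}W$ and $U^{(2)}=WX^{(2)}v$.'' The paper does not justify non-specialness at all, so your attempt to do so goes beyond what is written there. Your tracking of the accident structure via Proposition~\ref{prop-accident bispecial} and \eqref{eq-no-acc} is fine, and the left non-specialness arguments are essentially correct, though the phrasing ``force an additional accident at time $l-1$'' is slightly off: the clean statement is that $y_{l-1}U^{(1)}\in L(\om)$ would give $\de(\sigma^{l-1}(y))\ge d_1+1>|W|+1$, contradicting $\de(\sigma^{l-1}(y))=|W|+1$.

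Your right non-specialness argument, however, has a genuine gap, and the ``main obstacle'' is misdiagnosed. The route through Corollary~\ref{cor-sp-fac} does not work as you sketch it: for $U^{(0)}$ and $U^{(1)}$ you claim that ending in $W$ is incompatible with being right special, but by Corollary~\ref{cor-PS} the bispecial word $W$ is a suffix of every longer bispecial word, and right special words are suffixes of bispecial words, so \emph{every} right special word of length $\ge|W|$ also ends in $W$. The suffix $W$ therefore gives no obstruction whatsoever, and the argument collapses. The case of $U^{(2)}$ is not harder than the others, contrary to what you suggest.

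The correct argument is immediate from the definition of $\de$ and makes all three cases trivial at once. Since $d_i=\de(\sigma^{t_i}(y))$ (with $t_0=1$, $t_1=l$, $t_2=l'$) is by definition the length of the longest prefix of $\sigma^{t_i}(y)$ lying in $L(\om)$, the concatenation of $U^{(i)}$ with the actual next digit of $y$ is \emph{not} in $L(\om)$. Hence $U^{(i)}$ admits at most one right extension in $L(\om)$ and cannot be right special. No appeal to Corollary~\ref{cor-sp-fac} or to the hypothesis $N-1\neq F_n-2$ is needed.
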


\begin{proof}
By the formation of the trajectories in $\CD_{W}$, the words can be decomposed as $U^{(0)}=0X^{(0)}W$, $U^{(1)}=WX^{(1)}W$ and $U^{(2)}=WX^{(1)}v$.
\end{proof}

\begin{figure}[htbp]
\begin{center}
\includegraphics[scale=0.6]{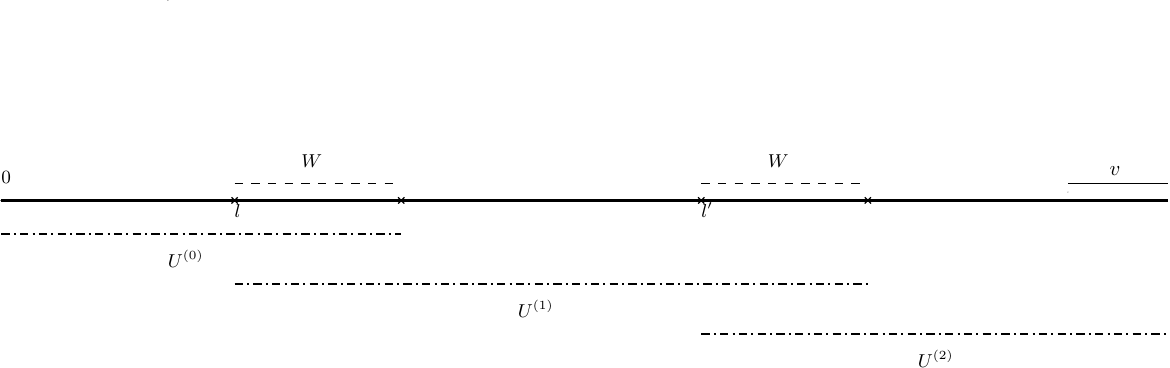}
\caption{Decomposition of excursion part}
\label{Figure 1}
\end{center}
\end{figure}

We denote by, 
\begin{equation}
\label{eq-DW}
\CC_{W}(\be):=\sum_{y=ux\in \CD_{W}}e^{\be S_{|u|}{\varphi(ux)}}
\end{equation}

Then from \eqref{eq-lambda}, we have the following inequality:

\begin{equation}
\label{eq-2}
\l_{\be}\geq\sum_{|W|\geq N}\CC_{W}(\be).
\end{equation}
 
 From \cite[Proposition 4.1]{Bruin_2015}, we observe that;

 $$\CC_{W}(\be)=e^{-NA}\sum\limits_{\substack{U^{(0)}, U^{(1)}\\\,\, U^{(2)}\,\,possible}}{\Bigl(\frac{|W|+1}{|U^{(0)}|+1}\Bigr)}^{\be}{\Bigl(\frac{|W|+1}{|U^{(1)}|+1}\Bigr)}^{\be}  {\Bigl(\frac{N+1}{|U^{(2)}|+1}\Bigr)}^{\be}.$$

The words $U^{(0)}$, $U^{(1)}$ and $U^{(2)}$ are in Lemma \ref{lem-dept-bicephalic}.

We denote the following identities:
\begin{enumerate}
\item $\CT_{0}(\be)=\sum_{k\geq 1}\sum\limits_{\substack{U^{(0)}(k)\\\,\, U^{(0)}(k)\,\,non\,special}}\Bigl({{|W|+1}\over {|U^{(0)}(k)|+1}}\Bigr)^{\be},$
\item $\CT_{W}(\be)=\sum_{k\geq 1}\sum\limits_{\substack{U^{(1)}(k)\in \CB_{W}(k)\\\,\, U^{(1)}(k)\,\,non\,special}}\Bigl({{|W|+1}\over {|U^{(1)}(k)|+1}}\Bigr)^{\be}$, 
\item $\CT_{v}(\be)=\sum_{k\geq 1}\sum\limits_{\substack{U^{(2)}(k)\\\,\, U^{(2)}(k)\,\,non\,special}}\Bigl({{N+1}\over {|U^{(2)}(k)|+1}}\Bigr)^{\be}.$

\end{enumerate}
Then, 
\begin{equation}
\label{eq-9}
\CC_{W}(\be)=e^{-NA}\CT_{0}(\be)\CT_{W}(\be)\CT_{v}(\be).
\end{equation}

\subsubsection{Lower bound for $\CT_{0}(\be)$ and $\CT_{W}(\be)$.}
\label{sec-3.2.1}

In Identity $\CT_{0}(\be)$ the sum runs over all possible non-identical bicephalic (non-special) for $(0,W)$. For every $k\geq 1$, the word $U^{(0)}(k)$ is a non-identical bicephalic for $(0,W)$ (see Lemma \ref{lem-dept-bicephalic}), and $k$ is the number of occurrence of $W$. Therefore, $$U^{(0)}(k)=U^{(0)}(1) R_{i_1}R_{i_2}\cdots R_{i_{k-1}},$$ and $R_{i_1}R_{i_2}\cdots R_{i_{k-1}}\in L_{k-1}(\Omega_{_W})$. The word 0 is a prefix of $W$ (see corollary \ref{cor-PS}), therefore, $|U^{(0)}(1)|\leq |WR_1|\leq F_{p}-2+F_{p-1}$, and $$|U^{(0)}(k)|\leq F_{p}-2+k F_{p-1}.$$
 
Consequently:
$$\CT_{0}(\be) \geq \sum_{k\geq 1}\Bigl({{F_p-1}\over {(F_p-1)+(k)(F_{p}-1})}\Bigr)^{\be},$$
this implies,
\begin{equation}
\label{eq-T0}
\CT_{0}(\be)\geq \zeta(\be)-1.
\end{equation}

\begin{lemma}
\label{lem-1}
Let $\be>2$, then 
\begin{equation}
\label{eq-zeta}
\CT_{W}(\be)\geq \zeta(\be-1)-2\zeta(\be),
\end{equation}
where $\zeta(\be)$ is the Riemann zeta function.
\end{lemma}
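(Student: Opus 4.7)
The plan is to obtain the bound by combining a cardinality count from Proposition \ref{prop-CB} with a uniform length estimate for elements of $\CB_W(k)$. Each summand in $\CT_W(\be)$ has the shape $\bigl((|W|+1)/(|U^{(1)}(k)|+1)\bigr)^\be$, and for fixed $k$ the inner sum ranges over the non-special members of $\CB_W(k)$. I therefore need a lower bound on how many such non-special words there are, and an upper bound on their length in terms of $k$ and $|W|$.

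The counting ingredient is immediate from Proposition \ref{prop-CB}: the set $\CB_W(k)$ has cardinality $k$ and contains either one or two special words depending on whether $k-1=F_m-2$ for some $m\geq 2$. Hence the number of non-special elements is always at least $\max(k-2,0)$. The length ingredient comes from Lemma \ref{prop-decomp-bicep}, which says that any $U^{(1)}(k)\in \CB_W(k)$ decomposes as $W R_{i_1}\cdots R_{i_{k-1}}$ with each $R_{i_j}\in\{R_1,R_2\}$ of length at most $|R_1|=F_{p-1}$. Thus $|U^{(1)}(k)|\leq |W|+(k-1)F_{p-1}$, and since $F_{p-1}\leq F_p-1=|W|+1$ (because $F_{p-2}\geq 1$), I get the clean, $p$-independent estimate
$$|U^{(1)}(k)|+1\leq k(|W|+1),$$
so that $(|W|+1)/(|U^{(1)}(k)|+1)\geq 1/k$, and each summand is bounded below by $1/k^\be$.

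Putting the two ingredients together yields
$$\CT_W(\be)\geq \sum_{k\geq 3}\frac{k-2}{k^\be}=\zeta(\be-1)-2\zeta(\be)+1\geq \zeta(\be-1)-2\zeta(\be),$$
where the identity follows from $\sum_{k\geq 1}(k-2)/k^\be=\zeta(\be-1)-2\zeta(\be)$ after subtracting the $k=1,2$ contributions ($-1$ and $0$). The hypothesis $\be>2$ is precisely what guarantees convergence of $\zeta(\be-1)$. There is no deep obstacle here: the argument is a direct combinatorial-analytic reduction once Proposition \ref{prop-CB} is in hand. The one point requiring care is the step $F_{p-1}\leq|W|+1$, which is what produces a bound independent of $p$ — any residual $p$-dependence would later spoil the summation over bispecial words $W$ in \eqref{eq-2}.
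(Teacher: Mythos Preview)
Your proof is correct and follows essentially the same route as the paper: the same cardinality count from Proposition~\ref{prop-CB} (at least $k-2$ non-special words in $\CB_W(k)$), the same length bound $|U^{(1)}(k)|+1\leq k(F_p-1)$ via $F_{p-1}\leq F_p-1$, and the same reduction to $\sum_k (k-2)/k^\be$. If anything, your version is slightly more careful: you explicitly justify the inequality $F_{p-1}\leq |W|+1$ and you correctly start the sum at $k\geq 3$, obtaining the sharper bound $\zeta(\be-1)-2\zeta(\be)+1$ before discarding the $+1$, whereas the paper writes the sum from $k\geq 1$ and thereby includes a spurious negative $k=1$ term.
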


\begin{proof}
The sum in $\CT_{W}(\be)$ runs over all possible identical bicephalic (non-special) words for $W$. Furthermore, $U^{(1)}(k)=U_{_W}(k)\in \CB_{W}(k)$ (see lemma \ref{prop-decomp-bicep}).
For each $k\geq1$, we have 
$U^{(1)}(k)=WR_{i_1}R_{i_2}\cdots R_{i_{k-1}}$, for some $R_{i_1}R_{i_2}\cdots R_{i_{k-1}}\in L_{k-1}(\Omega_{_W})$, and the following inequality holds,
\begin{equation}
\label{eq-4-3}
 |U^{(1)}(k)|+1\leq F_{p}-1+(k-1)F_{p-1}.
\end{equation} 
From proposition \ref{prop-CB}, if $k-1=F_{s}-2$, for some $s\geq 2$, then the set $\CB_{W}(k)$ contains exactly $k-1$ non-special words, otherwise $\CB_{W}(k)$ contains exactly $k-2$ non-special words. Therefore from \eqref{eq-4-3}


$$\CT_{W}(\be)\geq  \sum_{k\geq 1}(k-2)\Bigl({{F_{p}-1}\over {(F_{p}-1)+(k-1)(F_{p}-1)}}\Bigr)^{\be}=\sum_{n\geq 1}{(k-2)\over k^{\be}}.$$ Hence we obtained \eqref{eq-zeta}.
\end{proof}

\subsubsection{Lower bound for $\CT_{v}(\be)$.}
\label{sec-3.2.2}

The word $v$ is a purely left special(not a bispecial) word with length $N-1$. Therefore there is exactly one right special word with length $N-1$, say $v^{\p}$(see proposition 4.5.8 {\cite{berthe2010combinatorics}). Furthermore, in the Rauzy graph $\G_{_{N-1}}(\om)$, the factor associated with the path from $v$ to $v^{\p}$ is non-empty (see lemma \ref{cor-F-1} (step I)). Let us take the path from $v$ to $v^{\p}$ is as follows;

\begin{equation}
\label{eq-1}
v\overset{\xi_{1}}{\longrightarrow} \cdots {\overset{\xi_{t}}\longrightarrow}v^{\p}, 
\end{equation}
and $\xi_1\xi_2\cdots \xi_{t}\not= \epsilon$. 
Let $P$ be the least positive integer such that $\ga^{P}\geq N$, and $Q$ be the greatest positive integer such that $\ga^{Q}\leq N-1$. Let $W^{P}$ be the bispecial word with length $F_P-2$ and $W^{Q}$ be the bispecial word with length $F_Q-2$. From corollary \ref{cor-sp-fac}, $W^{Q}$ is a prefix of $v$, and $v$ is a prefix of $W^{P}$. Also, $W^{P}=v\xi_1\xi_2\cdots \xi_{t}$ (see proof of lemma \ref{cor-F-1} (step I)). Let us denote $R_1^{P}$ and $R_2^{P}$ be the factor loops of $W^{P}$ with length $F_{P-1}$ and $F_{P-2}$ respectively. 

\begin{lemma}
\label{lemWP}
The word $W^{P}R_{i_1}^{P}R_{i_2}^{P}\cdots R_{i_{h-1}}^{P}$, $i_j=1,2$, $1 \leq j \leq h-1$ has exactly $h$ occurrences of $v$.
\end{lemma}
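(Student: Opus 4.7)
The plan is to count occurrences of $v$ in $X:=W^{P}R_{i_1}^{P}R_{i_2}^{P}\cdots R_{i_{h-1}}^{P}$ by setting up a bijection with occurrences of $W^{P}$ in $X$, then counting the latter directly from the factor-loop structure. Since $v$ is a prefix of $W^{P}$, each occurrence of $W^{P}$ already begins with $v$, supplying at least $h$ occurrences of $v$; the delicate part is to rule out additional $v$-occurrences near the right end of $X$ that do not come from a $W^{P}$-occurrence fitting entirely inside $X$.

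The key step is that in $\om$, every occurrence of $v$ extends uniquely on the right to an occurrence of $W^{P}$. This reads off the path $v\overset{\xi_{1}}{\longrightarrow}\cdots\overset{\xi_{t}}{\longrightarrow}v^{\p}$ in $\G_{N-1}(\om)$ displayed just before the lemma: since $v$ is purely left special, it has outdegree $1$ in $\G_{N-1}(\om)$, and every intermediate vertex on the unique path to the right-special vertex $v^{\p}$ also has outdegree $1$ (the only outdegree-$2$ vertex is $v^{\p}$ itself). So the next $t$ letters after any occurrence of $v$ in $\om$ are forced to be $\xi_{1},\ldots,\xi_{t}$, and that occurrence extends to $v\xi_{1}\cdots\xi_{t}=W^{P}$. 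In other words, whenever $\om[j..j+N-2]=v$ and $j+|W^{P}|\le|\om|$, necessarily $\om[j..j+|W^{P}|-1]=W^{P}$. In the reverse direction, consecutive $W^{P}$-occurrences in $\om$ are spaced by $|R_{1}^{P}|=F_{P-1}$ or $|R_{2}^{P}|=F_{P-2}$ (Section~\ref{sec-2}), and $X$ is built by appending $h-1$ factor loops to the initial $W^{P}$; hence $X$ contains exactly $h$ occurrences of $W^{P}$, at positions $0,|R_{i_{1}}^{P}|,\ldots,|X|-|W^{P}|$, each of which begins with $v$.

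It remains to rule out any extra $v$-occurrence. Realize $X$ as $\om[p..p+|X|-1]$, and suppose $v$ occurs at an absolute position $j$ with $p\le j\le p+|X|-(N-1)$. The previous paragraph forces $j$ to be a $W^{P}$-position in $\om$; if it is not one of the $h$ already counted, then $j>p+|X|-|W^{P}|$, and the nearest $W^{P}$-position in $\om$ past $p+|X|-|W^{P}|$ lies at distance at least $F_{P-2}$. For such a $j$ to still fit inside $X$, we would need $F_{P-2}\le|W^{P}|-(N-1)=F_{P}-N-1$, equivalently $F_{P-1}\ge N+1$. But the minimality of $P$ combined with the standing hypothesis $N-1\neq F_{n}-2$ (Section~\ref{sec-main setting}) forces $F_{P-1}\le N$, a contradiction. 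Thus $X$ contains exactly $h$ occurrences of $v$. The main obstacle is the unique-right-extension fact, which relies on the Rauzy-graph structure of Section~\ref{sec-2.1}; the remaining boundary estimate is then a one-line Fibonacci inequality.
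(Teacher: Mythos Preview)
Your argument is correct and takes a different route from the paper's. The paper reduces to the case $h=2$ (asserting this suffices, with the induction over $h$ left implicit) and then rules out a third occurrence of $v$ in $W^{P}R_i^{P}$ by a pure length comparison against the two $v$-to-$v$ loop labels $S_1,S_2$ in $\G_{N-1}(\om)$. You instead establish a bijection between $v$-occurrences and $W^{P}$-occurrences via the unique-right-extension path $v\to\cdots\to v'$ in $\G_{N-1}(\om)$, count the $W^{P}$-occurrences from the return-word structure, and dispatch the boundary case with the Fibonacci inequality $F_{P-1}\le N$ (which indeed follows from $F_{P-1}-2<N-1$ together with the standing hypothesis $N-1\ne F_n-2$). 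Your route is more conceptual and handles general $h$ in one pass; the paper's is more hands-on at $h=2$ but then needs a gluing step it does not write out.

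One caveat: your step ``Realize $X$ as $\om[p..p+|X|-1]$'' tacitly assumes $X\in L(\om)$. The lemma as stated allows arbitrary $i_j\in\{1,2\}$, and for instance $W^{P}R_2^{P}R_2^{P}\notin L(\om)$ since $\phi(R_2^{P}R_2^{P})=11\notin L(\om)$ (Proposition~\ref{main-sec 2}). This does not affect the application in Lemma~\ref{lem-2}, where only factors of $\om$ are considered; and your bijection argument applied to $h=2$ (where $W^{P}R_i^{P}\in L(\om)$ always) combined with the paper's implicit induction covers the general case. Still, as written you should either add the hypothesis $X\in L(\om)$ or remark that the $h=2$ case plus an overlap/induction argument handles arbitrary sequences $(i_j)$.
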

\begin{proof}
It is enough to prove that $W^{P}R_i^{P}$, $i=1,2$, has exactly two occurrences of $v$. For this let $\G_{_{N-1}}(\om)$ be the Rauzy graph of order $N-1$. There are precisely two loops from vertex $v$ to $v$ in such a way that factors associated with these two loops share a common prefix $\xi_1\xi_2\cdots \xi_{t}$, [see, for instance, \cite{arnoux1991representation,cassaigne1997complexite}], and have unequal length. We denote $S_1$ and $S_2$ corresponding to the factors mentioned above, and $|S_1|\geq |S_2|$. There is no bispecial word with length in $[[N-1, F_{P}-2]]$, except $W^{P}$. Therefore from \cite[section I, case I]{arnoux1991representation}, we have $|S_1|=|R_1^{P}|$, and $|S_2|=|R_2^{P}|$. The word $v$ is a prefix of $W^{P}$, therefore in the word $W^{P}R_i^{P}$, for $i=1,2$, $v$ has at least two occurrences, with the occurrences of $W^{(P)}$. We suppose that $v$ has exactly three occurrences in $W^{P}R_i^{P}$, then $$W^{P}R_i^{P}=vS_{j_1}S_{j_2}\xi_1\xi_2\cdots \xi_{t}.$$ Since $W^{P}=v\xi_1\xi_2\cdots \xi_{t}$, and for any choice of $S_{j_1}S_{j_2}$, $j_1, j_2=1,2$, we always have $|W^{P}R_i^{P}|\leq |vS_{j_1}S_{j_2}\xi_1\xi_2\cdots \xi_{t}|$, this gives a contradiction. Therefore $v$ has exactly two occurrences in $W^{P}R_i^{P}$.
\end{proof}


\begin{lemma}
\label{lem-2}
The identity
\begin{equation}
\label{eq-lem22}
\CT_{v}(\be)\geq \zeta(\be)\ga^{(Q-p)\be},
\end{equation}
where $Q$ is the greatest integer such that $\ga^{Q}< N$.

\end{lemma}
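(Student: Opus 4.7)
The plan is to mimic the proof of Lemma \ref{lem-1}: exhibit for each $k\geq 1$ at least one non-special bicephalic word $U^{(2)}(k)=WX^{(2)}v$ containing exactly $k$ occurrences of the exit word $v$, and then bound $|U^{(2)}(k)|$ sharply from above. The key combinatorial input is Lemma \ref{lemWP}: at the vertex $v$ in the Rauzy graph $\G_{N-1}(\om)$ the two loops coincide with the factor loops $R_1^{(P)}$ and $R_2^{(P)}$ of the bispecial $W^{(P)}$, and each such loop appended after a terminal $v$ produces exactly one additional occurrence of $v$.

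Since $|W|=F_p-2\geq N$ we have $p\geq P$, so Corollary \ref{cor-PS} gives that $W^{(P)}$ is a prefix of $W$, and Corollary \ref{cor-sp-fac} gives that $v$ is itself a prefix of $W^{(P)}$. Lemma \ref{cor-F-1} expresses $W$ as $W^{(P)}$ followed by a canonical concatenation of loops of type $R^{(P)}_i$, making the positions of $v$ inside $W$ explicit. For each $k\geq 1$ I would then take
\[
U^{(2)}(k)\;=\;W\,R^{(P)}_{j_1}R^{(P)}_{j_2}\cdots R^{(P)}_{j_{k-1}}
\]
for indices $j_\ell\in\{1,2\}$ chosen so that the trailing block ends precisely at an occurrence of $v$. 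Proposition \ref{main-sec 2} guarantees that every such concatenation is admissible in $L(\om)$, and the argument used in Proposition \ref{prop-CB} shows that at most two choices of the index sequence yield a special word, so a non-special representative exists for every $k\geq 1$, while Lemma \ref{lemWP} guarantees exactly $k$ occurrences of $v$ in it.

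Each loop $R^{(P)}_i$ has length at most $F_{P-1}\leq F_p$, hence
\[
|U^{(2)}(k)|+1\;\leq\;(F_p-1)+(k-1)F_{P-1}\;\leq\;kF_p.
\]
Combining this with the defining inequality $\ga^Q\leq N-1<N+1$ and with a Fibonacci/golden-ratio comparison of the form $F_p\leq C\,\ga^p$, one obtains
\[
\left(\frac{N+1}{|U^{(2)}(k)|+1}\right)^{\be}\;\geq\;\frac{\ga^{(Q-p)\be}}{k^{\be}}
\]
up to a multiplicative constant which can be absorbed using the hypothesis $N\gg 2$ of Section \ref{sec-main setting}. Summing over $k\geq 1$ produces the factor $\zeta(\be)$ and yields the desired inequality $\CT_{v}(\be)\geq \zeta(\be)\,\ga^{(Q-p)\be}$.

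The main obstacle I foresee is the combinatorial book-keeping in the construction: one must verify that the two loops at $v$ in $\G_{N-1}(\om)$ are indeed $R^{(P)}_1$ and $R^{(P)}_2$ (no shorter loop can exist, since there is no bispecial of length strictly between $N-1$ and $F_P-2$), that appending them to $W$ remains admissible, and that the occurrences of $v$ already present inside $W$ are counted consistently with those added by the trailing loops. Once this is clean, the Fibonacci comparisons needed in the length bound are classical and the series sums explicitly to the Riemann zeta function.
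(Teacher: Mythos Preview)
Your proposal is correct and follows essentially the same route as the paper: decompose $W$ as $W^{(P)}$ followed by a concatenation of $R^{(P)}_i$-loops (via Lemma \ref{cor-F-1} and Corollary \ref{cor-PS}), build $U^{(2)}(k)=W\,R^{(P)}_{i_1}\cdots R^{(P)}_{i_{k-1}}$, invoke Lemma \ref{lemWP} to count occurrences of $v$, obtain the length bound $|U^{(2)}(k)|+1\le kF_p$, and finish with the Binet approximation $F_p\approx\ga^p$ and $\ga^Q\le N-1$. Your identification of the main obstacle (that the two return loops at $v$ in $\G_{N-1}(\om)$ have lengths $F_{P-1}$ and $F_{P-2}$ because no bispecial lies strictly between $N-1$ and $F_P-2$) is exactly what the paper isolates as Lemma \ref{lemWP}; you are also slightly more careful than the paper in explicitly addressing the non-specialness of the chosen representative and the multiplicative constant from the Fibonacci comparison.
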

\begin{proof}

Since $|W|\geq |W^{P}|$, therefore from corollary \ref{cor-PS}, $W^{P}$ is a prefix of $W$. Also $|W|=F_p-2$, then there exists a $r\geq 1$ such that $p=P+r$, and from lemma \ref{cor-F-1}\eqref{eq1}, $W$ has the following decomposition $$W=W^{P}R_1^{P}R_1^{P+1}\cdots R_1^{P+r-1}.$$ Moreover, from Equality \eqref{eq-corF1} in Lemma \ref{cor-F-1} , the word $R_1^{P}R_1^{P+1}\cdots R_1^{P+r-1}$ can be decomposed in form of $R_{j_1}^{P}R_{j_2}^{P}\cdots R_{j_{m}}^{P}\in L(\Omega_{W^{P}})$, $j_{j^{\p}}\in \{1,2\}$, and $1\leq j^{\p}\leq m$ (see section \ref{sec-2}). Let us consider a word $$WR_{i_1}^{P}R_{i_2}^{P}\cdots R_{i_{k-1}}^{P},$$ and $R_{i_1}^{P}R_{i_2}^{P}\cdots R_{i_{k-1}}^{P}\in L(\Omega_{W^{P}})$. From corollary \ref{cor-PS}, the word $W^{P}$ is a prefix and a suffix of $W$. Therefore, the word $WR_{i_1}^{P}R_{i_2}^{P}\cdots R_{i_{k-1}}^{P}\in L(\om)$ if, and only if, $$W^{P}R_{j_1}^{P}R_{j_2}^{P}\cdots R_{j_{m}}^{P}R_{i_1}^{P}R_{i_2}^{P}\cdots R_{i_{k-1}}^{P}\in \CB_{_W}(m+k),$$ (see section \ref{sec-2.3}).

Furthermore, $W^{P}R_{i_1}^{P}R_{i_2}^{P}\cdots R_{i_{k-1}}^{P}$ will be a suffix of $WR_{i_1}^{P}R_{i_2}^{P}\cdots R_{i_{k-1}}^{P}$, and has exactly $k$ occurrences of $v$ ( see lemma \ref{lemWP}). \\ 

For each $k\geq 1$, we have $$|U^{(2)}(k)|\leq |WR_{i_1}^{P}R_{i_2}^{P}\cdots R_{i_{k-1}}^{P}|\leq F_{p}-2+(k-1)F_{P-1}.$$ Since $p\geq P$, thus $$|U^{(2)}(k)|+1\leq kF_{p}.$$

 
The inequality $$\CT_{v}(\be)\geq \sum_{k\geq 1}\Bigl({{N+1}\over {kF_{p}}}\Bigr)^{\be},$$ holds. Since $Q$ is the greatest integer such that $\ga^{Q}\leq N-1$.
Also $p$ is large, using the approximation $F_{p}\approx \ga^{p}$ (by Binet's formula), we get
$$\CT_{v}(\be)\geq\ga^{(Q-p)\be}\sum_{k\geq 1}\Bigl({{1}\over {k}}\Bigr)^{\be}= \zeta(\be)\ga^{(Q-p)\be}.$$ This completes the proof.
\end{proof} 





From \eqref{eq-9}, and \eqref{eq-2}, we have 

$$\l_{\be}\geq\sum_{|W|\geq N}\CC_{W}(\be)=e^{-NA}\sum_{|W|\geq N}\CT_{0}(\be)\CT_{W}(\be)\CT_v(\be).$$
 
Using lemma's $\ref{lem-1}$, $\ref{lem-2}$, and $\eqref{eq-T0}$, we get the following inequality;

$$\l_{\be}\geq e^{-NA} \zeta(\be) (\zeta(\be)-1) \bigl(\zeta(\be-1)-2\zeta(\be)\bigr)\sum_{p\geq P}\ga^{(Q-p)\be},$$ where $P$ is the least positive integer such that $\ga^{P}\geq N$. The series $\sum\limits_{p\geq P}\ga^{(Q-p)\be}$, is a geometric series with common ratio $\ga^{-\be}<1$, $\be>0$, therefore, $\sum\limits_{p\geq P}\ga^{(Q-p)\be}=\frac{\ga^{(Q-P)\be}}{1-\ga^{-\be}}$. We have $|W^{P}|\approx \ga^{P}$, and $|W^{Q}|\approx \ga^{Q}$. Also, no bispecial word has a length in $]\ga^Q, \ga^P[$, therefore from lemma \ref{cor-F-1}\eqref{eq-corF1}, $P=Q+1$, and $W^{P}=W^{Q}R_{1}^Q$. Hence 
$$\l_{\be}\geq e^{-NA} {(\zeta(\be)-1)}^{2} \bigl(\zeta(\be-1)-2\zeta(\be)\bigr)\frac{1}{\ga^{\be}-1},$$

with $\be>0$. 

If $\be_c\in\,\,]1,2]$, then $\l_{\be_c}=+\8$, which contradicts $\l_{\be_c}=1$ (see for instance, \cite[page. 25]{Bruin_2015}. Moreover, $\be\mapsto\l_{\be}$ is a decreasing function. Therefore, $\be_c \notin\, ]0,1]$. This completes the proof.


\section{Some Remarks.}
\label{sec-7}
Here a natural question arises, that why do we choose trajectories $\CD_{W}$. Assume that $W^{'}$ is another bispecial word, and $W\not= W^{'}$. Let $\CD_{W,W^{'}}$ be the collection of all possible trajectories satisfy (1)-(7) (in section \ref{sec-6}), but at time $l^{\p}$ the orbit of return word intersects the cylinder $[W^{'}]$ instead of $[W]$, and $W^{'}$ is the bispecial that appears at accident time $l^{\p}$ (see proposition \ref{prop-accident bispecial}). In this case, the possible sum of the contribution over the excursion zone will be:
\begin{equation}
\label{eq-remark}
\CE(\be)=\sum\limits_{\substack{U^{(0)}, U^{(1)}\\\,\, U^{(2)}\,\,possible}}{\Bigl(\frac{|W|+1}{|U^{(0)}|+1}\Bigr)}^{\be}{\Bigl(\frac{|W^{\p}|+1}{|U^{(1)}|+1}\Bigr)}^{\be}  {\Bigl(\frac{N+1}{|U^{(2)}|+1}\Bigr)}^{\be},
\end{equation}
(see \cite[section 4]{Bruin_2015}). The word $U^{(1)}$ in \eqref{eq-remark} is a non-identical bicephalic word for the pair $(W,W^{\p})$, and $U^{(2)}$ is the bicephalic word for pair $(W^{\p}, v)$. The identity:
\begin{equation}
\label{eq-r2}
\CT_{_{W, W^{\p}}}(\be)=\sum_{n\geq 1}\sum\limits_{\substack{U^{(1)}(n)\\\,\, U^{(1)}(n)\,\,non\,special}}\Bigl({{|W^{\p}|+1}\over {|U^{(1)}(n)|+1}}\Bigr)^{\be},
\end{equation}

will be the part of the total sum of the contribution of $\CD_{W,W^{'}}$ trajectories, that lies in the excursion zone. 
 The word $U^{(1)}(n)$ is a non-identical bicephalic word for the pair $(W,W^{\p})$, and $W^{\p}$ has exactly $n$ occurrences in $U^{(1)}(n)$ after the first occurrence of $W$. The word $U^{(1)}(n)$ can be decomposed as,
$$U^{(1)}(n)=U^{(1)}(1)R_{m_1}R_{m_2}\cdots R_{m_{n-1}},$$ where, $R_{m_1}R_{m_2}\cdots R_{m_{n-1}}\in L_{n-1}(\Omega_{_W^{\p}})$. \\

In the identical bicephalic word, the multiplicity will remain $n-2$ for $n\geq 2$ (see proposition \ref{prop-CB}) over the lower bound of identity $\CT_{W}(\be)$ (see the proof of lemma \ref{lem-1}). This is the reason that we can get a nice expression $$\zeta(\be-1)-2\zeta(\be),$$ in the total sum of the lower bound. Whereas in the non-identical bicephalic word, the difficulty will be yield by the word $U^{(1)}(1)$. The word $U^{(1)}(1)$ is a non-identical bicephalic word for the pair of bispecial words ($W, W^{\p}$), $(W\not=W^{\p})$ and $W^{\p}$ has precisely one occurrence in $U^{(1)}(1)$ after the occurrence of $W$ as a prefix. The significant difficulty arises, when the bispecial words $W$ and $W^{\p}$ overlapped, and the length of the bispecial words may affect the multiplicity of the possible number of overlapping. As in the Fibonacci language, the two different bispecial words always have different length.  That may change the multiplicity for any $n\geq 2$.  Furthermore, the sum in identity $\CT_{W, W^{\p}}(\be)$ will depend on $\ga^{p-q}$ , where $|W|\approx\ga^{p}$ and $|W^{\p}|\approx\ga^{q}$ with $p\not=q$. This makes the proof more technical and less easy to present. 



\cleardoublepage
\bibliographystyle{alpha}


\bibliography{/Users/shamsaishaq/Documents/MathTeX/Synopsis/reference.bib}



\end{document}